\def \no{\nonumber}
\newcommand{\pa}{\partial}
\newtheorem{theorem}{Theorem}[section]
\newtheorem{lem}[theorem]{Lemma}
\newtheorem{cor}{Corollary}[section]
\newtheorem{thm}{\protect\theoremname}
\providecommand{\theoremname}{Theorem}
\begin{document}
\title[fully nonlinear boundary]{Liouville Theorem for $k-$curvature equation in half space with fully nonlinear boundary condition }
\author{Wei Wei}
\email{wei\_wei@nju.edu.cn}
\address{School of Mathematics, Nanjing University, Nanjing 210093, P.R. China}
\thanks{The author is partially supported by NSFC No. 12201288, No. 12571218, and BK20220755
and the Alexander von Humboldt Foundation. }
\begin{abstract}
We establish the Liouville theorem for positive constant $\sigma_{k}$-curvature equation
in $\mathbb{R}_{+}^{n}$ and  positive constant boundary $\mathcal{B}_{k}^{g}$ curvature equation, where the boundary curvature $\mathcal{B}_{k}^{g}$  is discovered by Sophie Chen \cite{Chen}  from the natural variational functional for $\sigma_{k}(A_{g})$. 
\end{abstract}

\maketitle

\section{Introduction}

Let $(M,g)$ be a smooth compact Riemannian manifold of dimension  $n\ge3$ with boundary $\pa M$.  Denote $$A_{g}=\frac{1}{n-2}(\mathrm{Ric}_{g}-\frac{R_{g}}{2(n-1)}g)$$
	as the  Schouten tensor of $g$,
	where  $\mathrm{Ric}_g$ and $R_g$ are respectively the Ricci tensor
and the scalar curvature of $g$.
	  Let
\begin{equation}\label{expression of sigmak}
\sigma_{k}(A_{g})=\sum_{1\le i_{1}<\cdots<i_{k}\le n}\delta\bigg(\begin{array}{ccc}
i_{1} & \cdots & i_{k}\\
j_{1} & \cdots & j_{k}
\end{array}\bigg)A_{i_{1}}^{j_{1}}\cdots A_{i_{k}}^{j_{k}}.
\end{equation}

In the field of the $\sigma_k$ curvature on closed manifolds,  abundant developments  have been achieved, see \cite{CGY1, CGY2, CGY3, GW, GWDuke, GLWJDG, GWAdv, Li-Li, Li-Li0, STW} etc.

	 We define the second fundamental form at some $P \in \pa M$ by 
	\[
	L_g(X,Y)=-\langle\nabla_{X}\vec{n},Y\rangle
	\]
		for all $X,Y \in T_P(\pa M)$, where $\vec{n}$ is the inward unit normal at $P$, and 
		$$h_{g}=\frac{1}{n-1}\sum_{\alpha}L_g(e_\alpha,e_\alpha)$$   is the mean curvature with $\{e_1,\cdots,e_{n-1}\}$  an orthonormal basis of $T_P(\pa M)$.
 From the variational aspect,
 S. Chen \cite{Chen}  discovered a natural
 boundary curvature $\mathcal{B}_{k}^{g}$ associated to $\sigma_{k}(A_{g})$  as follows:
\begin{equation}
\mathcal{B}_{k}^{g}=\sum_{i=0}^{k-1}C(n,k,i)\sigma_{2k-i-1,i}\left(A_{g}^{\mathbb{\mathrm{T}}},L_{g}\right)\quad n\geq2k,\label{eq:general boundary terms}
\end{equation}
where $A_{g}^{\mathbb{\mathrm{T}}}$ is the tangential part of $A_{g}$
on $\partial M$, $L_{g}$ is the second fundamental form of $\partial M$, $\sigma_{2k-i-1,i}\left(A_{g}^{\mathbb{\mathrm{T}}},L_{g}\right)$
is the mixed symmetric functions defined in \cite{Chen} and $C(n,k,i)=\frac{(2k-i-1)!(n-2k+i)!}{(n-k)!(2k-2i-1)!!i!}$.
On locally conformally flat $2k$-manifolds, the $\mathcal{B}_{k}^{g}$
curvature  is the boundary term in the Chern-Gauss-Bonnet
formula:

\[
\int_{M^{n}}\sigma_{\frac{n}{2}}(A_{g})dv_{g}+\oint_{\partial M}\mathcal{B}_{\frac{n}{2}}^{g}d\sigma_{g}=\frac{(2\pi)^{\frac{n}{2}}}{\left(\frac{n}{2}\right)!}\chi(M^{n},\partial M).
\]

Chang-Chen  \cite{CC}  raised  the question of whether there exists a metric
$g_{u}\in[g]$ to the following boundary value problem 
\begin{equation}
\begin{cases}
\sigma_{k}(A_{g_{u}})=c &\qquad \text{in}\,\,M,\\
\mathcal{B}_{k}^{g_{u}}=0 &\qquad \text{on}\,\,\partial M,
\end{cases}\label{eq:second boundary equation}
\end{equation}
where $c$ is a positive constant. The question is largely open.

Equation (\ref{eq:second boundary equation}) (see for example  \cite{CC,Chen})
is variational, formulated as: 
\[
\mathcal{F}_{k}'[v]=(2k-n)\bigg[\int_{M^{n}}\sigma_{k}(A_{g})vdv_{g}+\oint_{\partial M}\mathcal{B}_{k}^{g}vd\sigma_{g}\bigg],
\]
where 
\[
\mathcal{F}_{k}:=\int_{M^{n}}\sigma_{k}(A_{g})dv_{g}+\oint_{\partial M}\mathcal{B}_{k}^{g}d\sigma_{g}.
\]
On locally conformally flat manifolds of dimension $n\neq2k$, the
critical points of functional $\mathcal{F}_{k}$ in $\{g_{1}:g_{1}\in[g],vol(M,g_{1})=1\}$
for $n>2k$ are solutions of equation (\ref{eq:second boundary equation}),
see \cite{Chen}. For $n=2k,$ a functional whose critical points
are exactly the solutions of (\ref{eq:second boundary equation})
can be found in Proposition 2.3 in \cite{CW18}.

Currently, the only known outcome of Chang-Chen's question is
the case about four-dimensional manifolds with umbilic boundary, in
which it is equivalent to investigating the equation with Neumann boundary
condition. Specifically, as pointed out by S. Chen \cite{Chen}, when $\partial M$
is umbilic and $g\in\Gamma_{k}^{+}$, $\mathcal{B}_{k}^{g}=0$ if
and only if $h_{g}=0.$ Here the well-known $\Gamma_{k}^{+}$-cone
is defined as 
\[
\Gamma_{k}^{+}:=\{g|\sigma_{1}(A_{g})>0,\cdots,\sigma_{k}(A_{g})>0\}.
\]
S. Chen \cite{Chen} proved that on manifolds with umbilic boundary,
if Yamabe invariant $Y(M^{4},\partial M,[g])>0$ and $\int_{M^{4}}\sigma_{2}\left(A_{g}\right)dv_{g}+\oint_{\partial M}\mathcal{B}_{2}^{g}d\sigma_{g}>0$,
then there exists a metric $\hat{g}\in[g]$ such that $\sigma_{2}(\hat{g}^{-1}A_{\hat{g}})$
is constant and $\mathcal{B}_{2}^{\hat{g}}=0$ on $\partial M.$

When the boundary is non-umbilic or umbilic with non-vanishing $\mathcal{B}_{k}^{g}$
curvature,  for $k\ge 3$ , $\mathcal{B}_{k}^{g}$ is a fully
nonlinear equation involving second tangential derivatives and first derivatives
on the boundary, and for  $k=2$, the boundary operator is quasilinear. The complexity is rare in the literature even for the
 uniformly elliptic  equation of second order. 

To comprehend the nature of the boundary curvature $\mathcal{B}_{k}^{g}$,   we investigate the following alternative equation \eqref{eq:second boundary equation-1} on manifolds with umbilic
boundary 
\begin{equation}
\begin{cases}
\sigma_{k}(A_{g})=c, \qquad  g\in\Gamma_{k}^{+} &\qquad \mathrm{~~in~~} M^{n},\\
\mathcal{B}_{k}^{g}=c_{0} &\qquad \mathrm{~~on~~} \partial M,
\end{cases}\label{eq:second boundary equation-1}
\end{equation}
where $c$ and $c_{0}$ are positive constants, and this equation enjoys
variational structure. This non-vanishing $\mathcal{B}_{k}^{g}$ curvature
prominently involves second-order derivatives, as evidenced by the
following explicit expression of $\mathcal{B}_{k}^{g}$ on  umbilic
boundary $\partial M$ in \cite{Chen}:

\begin{equation}\label{eq:boundary equation}
\begin{aligned}
\mathcal{B}_{k}^{g}:= & \frac{(n-1)!}{(n-k)!(2k-1)!!}h_{g}^{2k-1}\\
 & +\sum_{s=1}^{k-1}\frac{(n-1-s)!}{(n-k)!(2k-2s-1)!!}\sigma_{s}\left(A_{g}^{\mathbb{\mathrm{T}}}\right)h_{g}^{2k-2s-1},
 \end{aligned}
\end{equation}
where $h_{g}$ is the mean curvature  of $\partial M$ with respect to $g$
and $A_{g}^{\mathbb{\mathrm{T}}}$ is tangential part of $A_{g}$
on the boundary.

For simplicity, we denote $g_{u}^{-1}A_{g_{u}}$ as $A_{g_{u}}$ and
$g_{u}^{-1}A_{g_{u}}^{\mathbb{\mathrm{T}}}$ as $A_{g_{u}}^{\mathbb{\mathrm{T}}}$
by abuse of notations. We remark that if $g\in\Gamma_{k}^{+}$, then
the linearization of the operator $(\sigma_{k}^{g},\mathcal{B}_{k}^{g})$
is elliptic, see (\ref{eq:linearization operator uner g-u^=00003D00007B4/(n-2)=00003D00007D})
in Section 2, which is the starting point of the whole paper.

Let $\mathbb{R}^n_+=\{(x_1,\cdots, x_n)\in \mathbb{R}^n|x_n>0\}$. 
In this paper we will build a Liouville theorem in $\mathbb{R}^n_+$  for positive constant boundary $\mathcal{B}_{k}^{g}$  curvature as follows: 
\begin{thm}
\label{thm:half space liouville}Given a positive constant $c_{0}$,
let $g_{u}=u^{\frac{4}{n-2}}|dx|^{2}$ in $\mathbb{R}_{+}^{n}$ satisfy
\begin{equation}
\begin{cases}
\sigma_{k}(A_{g_{u}})=2^{k}\binom{n}{k}, \quad g_{u}\in\Gamma_{k}^{+}, & \mathrm{~~in~~}\,\, \mathbb{R}_{+}^{n},\\
\mathcal{B}_{k}^{g_{u}}=c_{0} &\mathrm{~~on~~}\,\,\partial\mathbb{R}_{+}^{n},
\end{cases}\label{eq:main equation}
\end{equation}
where $\sigma_{k}(A_{g_{u}})$ and $\mathcal{B}_{k}^{g_{u}}$ are  defined by (\ref{expression of sigmak}) and (\ref{eq:boundary equation}) respectively.
Assume that $\lim_{x\rightarrow0}u_{0,1}$ exists, where $u_{0,1}(x):=|x|^{2-n}u\left(\frac{x}{|x|^{2}}\right)$.
Then there exist a positive constant $b\in\mathbb{R}_{+}$ and $(\overline{x}^{\prime},\overline{x}_{n})\in\mathbb{R}^{n}$
such that 
\begin{equation}
u(x^{\prime},x_{n})=\left(\frac{\sqrt{b}}{1+b\left|(x^{\prime},x_{n})-(\overline{x}^{\prime},\overline{x}_{n})\right|^{2}}\right)^{(n-2)/2},\label{eq:standard form}
\end{equation}
where $h_{g_{u}}=-\frac{2}{n-2}u_{x_{n}}u^{-\frac{n}{n-2}}=-2\sqrt{b}\overline{x}_{n}>0$
and $A_{g_{u}}^{\mathrm{T}}=2\mathbb{I}_{(n-1)\times(n-1)}$ satisfy

\begin{align*}
\sum_{s=1}^{k-1}\frac{(n-s)!}{(n-k)!(2k-2s-1)!!n}\sigma_{s}\left(2\mathbb{I}_{(n-1)\times(n-1)}\right)[2\sqrt{b}\overline{x}_{n}]^{2k-2s-1}\\
+\frac{(n-1)!}{(n-k)!(2k-1)!!}[2\sqrt{b}\overline{x}_{n}]^{2k-1} & =\frac{n+1-k}{n}c_{0}.
\end{align*}
\end{thm}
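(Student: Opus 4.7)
The plan is to apply the method of moving spheres centered at points of the boundary $\partial\mathbb{R}_+^n$, exploiting the conformal covariance of both the interior curvature $\sigma_k(A_{g_u})$ and the boundary curvature $\mathcal{B}_k^{g_u}$ under Möbius transformations that preserve the half-space. For each $x_0\in\partial\mathbb{R}_+^n$ and $\lambda>0$, introduce
\[
u_{x_0,\lambda}(y)=\Bigl(\tfrac{\lambda}{|y-x_0|}\Bigr)^{n-2}u\Bigl(x_0+\tfrac{\lambda^2(y-x_0)}{|y-x_0|^2}\Bigr).
\]
Since sphere-inversion at a boundary point sends $\overline{\mathbb{R}_+^n}$ to itself, conformal covariance ensures that $u_{x_0,\lambda}$ solves the same system (\ref{eq:main equation}) on $\mathbb{R}_+^n\setminus\{x_0\}$. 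The hypothesis on $\lim_{x\to 0}u_{0,1}$ is the quantitative decay information needed to start moving spheres from small $\lambda$ and to prevent the procedure from running away to $\lambda=\infty$.

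The first step is the initiation lemma: for every $x_0\in\partial\mathbb{R}_+^n$ there exists $\lambda_1(x_0)>0$ such that $u_{x_0,\lambda}\le u$ on $\overline{\mathbb{R}_+^n}\setminus B_\lambda(x_0)$ for all $\lambda\in(0,\lambda_1]$; this is a direct estimate using positivity of $u$ and the explicit form of the Kelvin transform. Set $\bar\lambda(x_0):=\sup\{\lambda:u_{x_0,\mu}\le u\text{ on }\overline{\mathbb{R}_+^n}\setminus B_\mu(x_0)\text{ for all }\mu\le\lambda\}$. The next step is the dichotomy: either $\bar\lambda(x_0)=+\infty$ for some $x_0$, which combined with the assumption on $u_{0,1}$ forces $u$ to be radially symmetric about $x_0$ and leads to the conclusion, or $\bar\lambda(x_0)<\infty$ for all $x_0$, in which case at the critical $\bar\lambda(x_0)$ the two functions $u$ and $u_{x_0,\bar\lambda(x_0)}$ touch somewhere in $\overline{\mathbb{R}_+^n}\setminus B_{\bar\lambda(x_0)}(x_0)$. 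A strong maximum principle for $\sigma_k$ in $\Gamma_k^+$ at interior touching points, together with a Hopf-type lemma for the linearized boundary operator at boundary touching points, forces $u\equiv u_{x_0,\bar\lambda(x_0)}$. Running this for every $x_0\in\partial\mathbb{R}_+^n$ and invoking the standard calculus lemma of Li--Zhu, one concludes that $u$ has the form (\ref{eq:standard form}) for some $b>0$ and $(\bar x',\bar x_n)\in\mathbb{R}^n$.

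Once the form (\ref{eq:standard form}) is established, the interior equation $\sigma_k(A_{g_u})=2^kC_n^k$ is satisfied automatically by the standard bubble. A direct substitution into the expression (\ref{eq:boundary equation}), using $A_{g_u}^{\mathrm T}=2\mathbb{I}_{(n-1)\times(n-1)}$ and $h_{g_u}=-2\sqrt{b}\,\bar x_n$ computed explicitly from (\ref{eq:standard form}), yields the final algebraic identity relating $b$, $\bar x_n$, and $c_0$. Positivity of $c_0$ together with the $\Gamma_k^+$ condition then forces $\bar x_n<0$ so that $h_{g_u}>0$.

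The main obstacle is the comparison step at \emph{boundary} touching points. The interior comparison is standard since $\sigma_k^{1/k}$ is concave on $\Gamma_k^+$ and one has a divergence/maximum principle structure; however the boundary operator $\mathcal{B}_k^g$ is fully nonlinear of second order in tangential directions with a first-order normal term, so neither classical Hopf nor the standard $\sigma_k$ boundary lemma directly applies. One must linearize $\mathcal{B}_k^g$ at a solution $g_u\in\Gamma_k^+$, verify ellipticity of the resulting tangential operator together with the correct sign of the normal coefficient (this is precisely the content signaled by (\ref{eq:linearization operator uner g-u^=00007B4/(n-2)=00007D}) in Section 2), and then deduce a strong maximum principle / Hopf lemma for the difference $u-u_{x_0,\bar\lambda(x_0)}$ along the boundary. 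This is the technical heart of the argument; everything else is a careful but standard adaptation of moving-sphere arguments in half-space geometry.
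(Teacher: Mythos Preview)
Your outline correctly identifies the moving-sphere mechanism and the role of the boundary linearization, but there is a genuine gap at the step where you assert that at the critical radius $\bar\lambda(x_0)$ the functions $u$ and $u_{x_0,\bar\lambda(x_0)}$ must touch somewhere in $\overline{\mathbb{R}_+^n}\setminus B_{\bar\lambda(x_0)}(x_0)$. On an unbounded domain this is not automatic: it can happen that $u>u_{x_0,\bar\lambda(x_0)}$ strictly everywhere while the sphere nonetheless stops because the inequality degenerates \emph{at infinity}. Under the stronger assumption that $u_{0,1}$ extends to a positive $C^2$ function near $0$, one rules this out by proving $\lim_{|y|\to\infty}|y|^{n-2}(u-u_{x_0,\bar\lambda(x_0)})>0$ via a Hopf argument applied to the Kelvin transform at the origin (this is exactly the paper's Theorem~\ref{thm:half space liouville-1}). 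But under the mere hypothesis that $\lim_{x\to 0}u_{0,1}$ exists, one can only deduce $\bar\lambda(x_0)^{n-2}u(x_0)=\alpha:=\liminf|y|^{n-2}u$ for every $x_0$, which makes that limit exactly $0$ and blocks the continuation argument. You therefore cannot conclude $u\equiv u_{x_0,\bar\lambda(x_0)}$, and the Li--Zhu calculus lemma is not available.

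The paper's resolution (Section~4) is substantially different from your sketch and is where the hypothesis on $\lim u_{0,1}$ is actually used. One observes that $W:=\tfrac{2}{n-2}\ln u_{0,1}$, restricted to $\partial\mathbb{R}_+^n\setminus\{0\}$, is \emph{superharmonic on the $(n-1)$-dimensional hyperplane} (from $\mathrm{Tr}\,A^{\mathrm T}>0$, a consequence of $\Gamma_k^+$). For each boundary point $x$ the Kelvin image of $u_{x,\bar\lambda(x)}$ gives a smooth barrier $w^{(x)}$ with $W\ge\tfrac{2}{n-2}\ln w^{(x)}$ near $0$ and common value $\tfrac{2}{n-2}\ln\alpha$ at $0$. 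Li--Li's lift lemma (Lemma~\ref{lem:lift lemma}), applied on $\partial\mathbb{R}_+^n$, then forces all the barriers to share the same tangential gradient at $0$; otherwise $\liminf_{z\to 0,\,z\in\partial\mathbb{R}_+^n}W>\tfrac{2}{n-2}\ln\alpha$, contradicting the assumed existence of the limit. From this one integrates to obtain an explicit bubble formula for $u$ on $\partial\mathbb{R}_+^n$, checks directly that $u\equiv u_{0,\bar\lambda(0)}$, and only then reduces to the $C^2$ case. Your proposal omits this entire step; the boundary Hopf argument you describe handles a different difficulty (finite boundary touching points) and is indeed needed, but it does not close the gap above. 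A secondary omission: even in the $C^2$ setting, having $u\equiv u_{x_0,\bar\lambda(x_0)}$ for all $x_0\in\partial\mathbb{R}_+^n$ determines $u$ only on $\partial\mathbb{R}_+^n$ via the Li--Zhu lemma; obtaining (\ref{eq:standard form}) in the interior requires a further conformal map to a ball, a moving-plane argument yielding radial symmetry from the constant Dirichlet and Neumann data, and the ODE classification of Theorem~\ref{thm:Li-Li-JEMS Thm3}.
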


From the proof, we actually know that $\lim_{x\rightarrow0}u_{0,1}$
is a positive constant (see Lemma \ref{lem: how far can bar=00003D00007B=00003D00005Clambda=00003D00007D be?}).
When $\mathcal{B}_{k}^{g_{u}}=0$, the boundary condition becomes
$h_{g_{u}}=0$ and the corresponding Liouville theorem has been established
by Li-Li \cite{Li-Li2}, see more general statement in \cite{Li-Li2}.
As the boundary condition is fully nonlinear for $k\ge 3$ and quasilinear for $k=2$, to ensure that the maximum principle and the Hopf lemma work on boundary, we consider the metric with positive
boundary $\mathcal{B}_{k}^{g_{u}}$ curvature in $\Gamma_{k}^{+}$
cone. 
To prove Theorem \ref{thm:half space liouville}, we apply
the key Lemma \ref{lem:lift lemma} in \cite{Li-Li} to $W:=\frac{2}{n-2}\ln u_{0,1}$
on $\partial\mathbb{R}_{+}^{n}\backslash\{0\}$, where $W(x',0)$
is superharmonic on $\partial\mathbb{R}_{+}^{n}$.

$\vspace{0.5pt}$

With the above theorem, we have the following corollary in unit ball $\mathbb{B}_{1}^{n}$ centered at the origin.
\begin{cor}\label{corollary}
Given a positive constant $c_{0}$, let $g_{w}=w^{\frac{4}{n-2}}|dx|^{2}$
in $\mathbb{B}_{1}^{n}$ satisfy 
\[
\begin{cases}
\sigma_{k}(A_{g_{w}})=2^{k}\binom{n}{k}
& \mathrm{in}\quad\mathbb{B}_{1}^{n},\quad g_{w}\in\Gamma_{k}^{+},\\
\mathcal{B}_{k}^{g_{w}}=c_{0} & \mathrm{on}\quad\partial\mathbb{B}_{1}^{n},
\end{cases}
\]
where $\sigma_{k}(A_{g_{w}})$ and $\mathcal{B}_{k}^{g_{w}}$ are  defined by (\ref{expression of sigmak}) and (\ref{eq:boundary equation}) respectively. Then 
\[
w=(\frac{\sqrt{b}}{1+b|x-\bar{x}|^{2}})^{\frac{n-2}{2}},
\]
where $\bar{x}\in\mathbb{R}^{n}$ and $b\in\mathbb{R}^{+}$ satisfy $\mathcal{B}_{k}^{g_{w}}=c_{0}.$ 
\end{cor}

The Liouville theorem is a fundamental tool in studying semilinear and fully nonlinear equations, especially those arising from geometric nonlinear problems. For example,  the Liouville theorem in Euclidean space plays an important role in the existence of solution to $k$-Yamabe problem, see  Chang-Gursky-Yang \cite{CGY1} and Li-Nguyen \cite{LN} etc.
 As applications of Liouville theorem on sphere, various Sobolev inequalities related with  $\sigma_k$ curvature have been established, see Guan-Wang \cite{GWDuke}, Chang-Yang \cite{CY} and Brendle-Viaclovsky \cite{BV}. 

Extensive studies on Liouville Theorems on $\mathbb{R}^{n}$ and $\mathbb{S}^{n}$
have been conducted in the context of the Yamabe-type equation. For
the semilinear elliptic equations, the Liouville theorem can be traced back
to Obata \cite{Obata}, Gidas-Ni-Nirenberg \cite{GNL} and Caffarelli-Gidas-Spruck
\cite{CGS}. These influential works originate the application of the method of moving
planes/spheres to Liouville theorems. For more details of the method of
moving spheres in  Li-Zhu \cite{LZhu}, Li-Zhang
\cite{Li-Zhang} and references therein. For fully nonlinear equations, especially the $\sigma_{k}$-curvature
equation on $\mathbb{R}^{n}$, Viaclovsky \cite{V1,V2} obtained the
Liouville theorem under the additional hypothesis that $|x|^{2-n}u(x/|x|^{2})$
can be extended to a positive $C^{2}$ function near $x=0$ for $2\le k\le n.$
Concerning $k=2$, Chang-Gursky-Yang \cite{CGY3} utilized Obata's
argument to establish the case $n=4,5$ and higher dimensional
cases under some additional assumptions. For $n=4$, by constructing
a monotone formula with respect to level set of the solution, the
author joint with Fang and Ma \cite{FMW} introduced an alternative approach
and proved a Liouville theorem for some  general $\sigma_{2}$-curvature type
equation, which may not be conformal invariant. Some general cases
for conformally invariant equations including the $\sigma_{k}$-curvature
equation were established by Li-Li \cite{Li-Li0,Li-Li} and Li-Lu-Lu
\cite{LLL}. Also Chu-Li-Li \cite{CLL} derived the necessary and sufficient
conditions for the validity of Liouville-type theorems in $\mathbb{R}^{n}$. 

Concerning the constant boundary mean curvature related to ball, 
Escobar
\cite{Es} classified the metric with constant scalar curvature and
the constant boundary mean curvature.  In $\mathbb{R}^{n}_+$, Liouville theorems for metrics with constant scalar
curvature and constant boundary mean curvature were established by Li-Zhu \cite{LZhu}, Chipot-Shafrir-Fila
\cite{CSF1996} and Li-Zhang \cite{Li-Zhang}. For fully nonlinear
cases including $\sigma_{k}$ curvature in $\mathbb{R}^{n}_+$, Li-Li \cite{Li-Li2}
obtained the corresponding Liouville theorem with constant boundary
mean curvature.

For constant boundary $\mathcal{B}_{k}^{g}$ curvature on  $\partial \mathbb{R}^{n}_+$  or  $\partial \mathbb{S}^{n}_+$, the Liouville theorems  for metrics with constant $\sigma_{k}$-curvature are few as the boundary $\mathcal{B}_{k}^{g}$ curvature brings new difficulties. Assuming that
$\sup_{\mathbb{S}^{n}}h_{g}\le(k+1)\inf_{\mathbb{S}^{n}}h_{g}$,  Case-Wang \cite{CGS} demonstrated an Obata-type theorem
for $\sigma_{k}(A_{g})=0$ on $\mathbb{S}_{+}^{n+1}$ and $\mathcal{B}_{k}^{g}=c_0\in \mathbb{R}_+$
on $\partial\mathbb{S}_{+}^{n+1}$.
Case-Wang \cite{CW2020} classified the local minimizers of $\mathcal{F}_{2}$ among all conformally flat metrics in $\mathbb{B}_{1}^{5}$ and $\mathbb{B}_{1}^{6}$ with unit boundary volume, as well as the local minimizer of a analogous functional in $\mathbb{B}_{1}^{4}$. Furthermore, both authors with Moreira \cite{CMW} obtained
some non-uniqueness results in different setting. 

This paper is concerned with metrics of positive constant $\sigma_{k}$ curvature and positive constant boundary $\mathcal{B}_{k}^{g}$ curvature,   in which the method
of moving spheres works,  since the linearized equation of $(\sigma_{k}^{g},\mathcal{B}_{k}^{g})$
is elliptic.

We organize the paper as follows. In Section 2 we provide some facts
about the linearized operator of $\mathcal{B}_{k}^{g}$. In Section
3, inspired by Li-Li \cite{Li-Li2}, we prove Theorem \ref{thm:half space liouville-1}
in stronger assumption (\ref{eq:strong assmption}) by the method
of moving spheres. In Section 4, due to the super-harmonicity of $\frac{2}{n-2}\ln u_{0,1}$
on  lower dimensional space, we utilize the key lemma in \cite{Li-Li}
in the lower dimensional space, and then obtain Theorem \ref{thm:half space liouville} by  Theorem \ref{thm:half space liouville-1}. 
In Appendix we list some useful lemmas in \cite{Li-Zhang,Li-Li,Li-Li2}
for readers' convenience.

During the paper was submitted, B. Z. Chu, Y. Y. Li and Z. Y. Li \cite{CLL2}
have extended my result by removing the assumption that $\lim_{x\rightarrow0}u_{0,1}$
exists.

\bigskip

\noindent{\bf Acknowledgments.} The author would like to thank
 Prof. X. Z. Chen for enlightening discussions and constant support, and Dr. Biao Ma for his suggestion in writing the Appendix. The author would like to thank the referees for their helpful comments and suggestions, which have improved the presentation and readability of the paper.
 
  \section{Preliminary}

In this section we describe the linearization of $(\sigma_{k}^{g},\mathcal{B}_{k}^{g})$
on $\partial\mathbb{R}_{+}^{n}$, which is an elliptic operator on the boundary depending on the mean curvature of $g$ and the cone condition.

In $\mathbb{R}_{+}^{n}$, if $g_{u}=u^{\frac{4}{n-2}}g_{\mathbb{E}}$,
then 
\begin{equation}\label{mean curvature expression}
h_{g_{u}}=-\frac{2}{n-2}u^{-\frac{n}{n-2}}u_{n}\quad\text{on}\,\,\partial\mathbb{R}_{+}^{n},
\end{equation}
where $u_{n}=\frac{\partial u}{\partial x_{n}}$ $\text{on}\,\,\partial\mathbb{R}_{+}^{n}.$
It holds that

\begin{align*}
g_{u}^{-1}A_{g_{u}}= & -\frac{2}{n-2}u^{-(n+2)/(n-2)}\nabla^{2}u\\
 & +\frac{2n}{(n-2)^{2}}u^{-2n/(n-2)}\nabla u\otimes\nabla u-\frac{2}{(n-2)^{2}}u^{-2n/(n-2)}|\nabla u|^{2}\mathbb{I}_{n\times n},
\end{align*}
and on $\partial\mathbb{R}_{+}^{n}$,
\begin{align}\label{tangential expression}
g_{u}^{-1}A_{g_{u}}^{\mathbb{\mathrm{T}}}= & \bigg[-\frac{2}{n-2}u^{-(n+2)/(n-2)}\nabla^{2}u+\frac{2n}{(n-2)^{2}}u^{-2n/(n-2)}\nabla u\otimes\nabla u\bigg]^{\mathbb{\mathrm{T}}}\no\\
 & -\frac{2}{(n-2)^{2}}u^{-2n/(n-2)}|\nabla u|^{2}\mathbb{I}_{(n-1)\times(n-1)}.
\end{align}

\subsection{Linearization of $\mathcal{B}_{k}^{g}$ on \textmd{$\partial\mathbb{R}_{+}^{n}$}}

We first study the linearization of $\mathcal{B}_{k}^{g_{u}}$. In
\cite{CGS} Case-Wang computed the conformal linearization of $\mathcal{B}_{k}^{g}$
and they used the symbol $H_{k}$ instead. Our notations are consistent
with S. Chen \cite{Chen}. Throughout the paper, $\alpha,\beta,\gamma$ range
from $1,\cdots,n-1$.

 Now we present some computations of the boundary operator, which will
be frequently used.

Here we call operator $(\sigma_{k}^{g_{u}},\mathcal{B}_{k}^{g_{u}})$
is elliptic on $\partial\mathbb{R}_{+}^{n}$, if the matrix $\{\tilde{a}_{ij}\}_{n\times n}$  in the linearization $\mathbb{L}\phi(x)=-\tilde{a}_{ij}\phi_{ij}(x)+\tilde{b}_i\phi_i(x)+\tilde{c}(x)\phi(x)$ of  $\sigma_{k}^{g_{u}}$ is non-negative near $\partial\mathbb{R}_{+}^{n}$, and in the linearized operator  $L\phi=-a_{\alpha\beta}\phi_{\alpha\beta}+b_\alpha \phi_\alpha-b_n\phi_n+c\phi$  of  $\mathcal{B}_{k}^{g_{u}}$, the matrix $\{a_{\alpha\beta}\}_{(n-1)\times (n-1)}$ and the coefficient  $b_n$ are both non-negative on  $\partial\mathbb{R}_{+}^{n}$. And if $\{\tilde{a}_{ij}\}$, $\{a_{\alpha\beta}\}$ and $b_n$ are positive, then we can say the linearization  of $(\sigma_{k}^{g_{u}},\mathcal{B}_{k}^{g_{u}})$ are strictly elliptic. The consistent sign of $\{\tilde{a}_{ij}\}$, $\{a_{\alpha\beta}\}$ and $b_n$ are important in this paper and will be frequently used.  The intention to call such way is for the validity of  maximum principle and the Hopf lemma.  

Since the powers of $h_{g_u}$ in \eqref{eq:boundary equation} are always odd, together with $\mathcal{B}_{k}^{g_{u}}>0$ and $g_{u}\in\Gamma_{k}^{+}$, $h_{g_{u}}$ is always positive if the boundary is umbilic.  This observation can be traced back to \cite{Chen}.

\begin{lem}\label{elliptic}
Let $g_{u}=u^{\frac{4}{n-2}}g_{\mathbb{E}}$. 
Assume that $g_{u}\in\Gamma_{k}^{+}$ and on $\partial\mathbb{R}_{+}^{n}$,  $\mathcal{B}_{k}^{g_{u}}$ 
is positive. Then, operator $(\sigma_{k}^{g_{u}},\mathcal{B}_{k}^{g_{u}})$
is elliptic on $\partial\mathbb{R}_{+}^{n}$.
\end{lem}

\begin{proof}

Assume that $g_{0}=u_{0}^{\frac{4}{n-2}}g_{\mathbb{E}}$
and $g_{1}=u_{1}^{\frac{4}{n-2}}g_{\mathbb{E}}$ satisfy $\mathcal{B}_{k}^{g_{0}}=\mathcal{B}_{k}^{g_{1}}=c_{0}.$
Then, taking $\psi:=u_{1}-u_{0}$ and denote $u=tu_{1}+(1-t)u_{0}$
and $g_{u}=u^{\frac{4}{n-2}}g_{\mathbb{E}},$ 
, we {\bf claim:}
\begin{align}
0 & =\mathcal{B}_{k}^{g_{1}}-\mathcal{B}_{k}^{g_{0}}\nonumber \\
 & =-a_{\alpha\beta}\psi_{\alpha\beta}+b_{\alpha}\psi_{\alpha}-b_{n}\psi_{n}+c\psi:=L\psi,\label{eq:linearization operator uner g-u^=00003D00007B4/(n-2)=00003D00007D}
\end{align}
where $b_{\alpha}$ and $c$ depend on $u_{1},u_{0}$, \begin{equation}
a_{\alpha\beta}:=\int_{0}^{1}\frac{2}{n-2}u^{-\frac{n+2}{n-2}}\sum_{s=1}^{k-1}\frac{(n-1-s)!}{(n-k)!(2k-2s-1)!!}h_{g_{u}}^{2k-2s-1}\frac{\partial\sigma_{s}(A_{g_{u}}^{\mathbb{\mathrm{T}}})}{\partial A_{g_u,\alpha\beta}^{\mathbb{\mathrm{T}}}}dt\label{eq:coeffients of tangential second derivative}
\end{equation}
and 
\begin{equation}
b_{n}:=\frac{2}{n-2}\int_{0}^{1}u^{-\frac{n}{n-2}}\sigma_{k-1}(A_{g_{u}}^{\mathbb{\mathrm{T}}})dt.\label{eq:coefficients of first derivative}
\end{equation}

To prove this {\bf Claim},  for simplicity, denote $C_{1}(n,k,s)=\frac{(n-1-s)!}{(n-k)!(2k-2s-1)!!}$.

Observe that 
\[
0=\mathcal{B}_{k}^{g_{1}}-\mathcal{B}_{k}^{g_{0}}=\int_{0}^{1}\frac{\partial}{\partial t}\big(\mathcal{B}_{k}^{g_{u}}\big)dt,
\]
where $g_{u}=(tu_{1}+(1-t)u_{0})^{\frac{4}{n-2}}g_{\mathbb{E}}$.

Define 
\[
b_{n}^{*}:=\frac{2}{n-2}u^{-\frac{n}{n-2}}\sigma_{k-1}\left(A_{g_{u}}^{\mathbb{\mathrm{T}}}\right)
\]
and 
\[
a_{\alpha\beta}^{*}:=\frac{2}{n-2}u^{-\frac{n+2}{n-2}}\sum_{s=1}^{k-1}C_{1}(n,k,s)h_{g_{u}}^{2k-2s-1}\frac{\partial\sigma_{s}(A_{g_{u}}^{\mathbb{\mathrm{T}}})}{\partial A_{u,\alpha\beta}^{\mathbb{\mathrm{T}}}}.
\]
As $\frac{du}{dt}=\psi=u_{1}-u_{0}$, 
with  (\ref{eq:boundary equation}), we have 
\begin{align*}
 & \text{\ensuremath{\frac{\partial}{\partial t}}(\ensuremath{\mathcal{B}_{k}^{g_{u}}})}\\
= & C_{1}(n,k,0)(2k-1)h_{g_{u}}^{2k-2}\frac{\partial h_{g_{u}}}{\partial t}\\
 & +\sum_{s=1}^{k-1}C_{1}(n,k,s)\frac{\partial}{\partial t}\sigma_{s}\left(A_{g_{u}}^{\mathbb{\mathrm{T}}}\right)h_{g_{u}}^{2k-2s-1}\\
 & +\sum_{s=1}^{k-1}C_{1}(n,k,s)(2k-2s-1)\sigma_{s}\left(A_{g_{u}}^{\mathbb{\mathrm{T}}}\right)h_{g_{u}}^{2k-2s-2}\frac{\partial h_{g_{u}}}{\partial t}\\
= & C_{1}(n,k,0)(2k-1)h_{g_{u}}^{2k-2}\left(\frac{2}{n-2}\frac{n}{n-2}u^{-\frac{2n-2}{n-2}}u_{n}\psi-\frac{2}{n-2}u^{-\frac{n}{n-2}}\psi_{n}\right)\\
 & +\sum_{s=1}^{k-1}C_{1}(n,k,s)h_{g_{u}}^{2k-2s-1}\frac{\partial}{\partial A_{u,\alpha\beta}^{\mathbb{\mathrm{T}}}}\sigma_{s}\left(A_{g_{u}}^{\mathbb{\mathrm{T}}}\right)\bigg\{\frac{2}{n-2}\frac{n+2}{n-2}u^{-\frac{2n}{n-2}}u_{\alpha\beta}\psi\\
 & -\frac{2}{n-2}u^{-\frac{n+2}{n-2}}\psi_{\alpha\beta}-\frac{2n}{(n-2)^{2}}\frac{2n}{n-2}u^{-\frac{2n}{n-2}-1}u_{\alpha}u_{\beta}\psi\\
 & +\frac{2n}{(n-2)^{2}}u^{-\frac{2n}{n-2}}\psi_{\alpha}u_{\beta}+\frac{2n}{(n-2)^{2}}u^{-\frac{2n}{n-2}}\psi_{\beta}u_{\alpha}\\
 & +\frac{2}{(n-2)^{2}}\frac{2n}{n-2}u^{-\frac{2n}{n-2}-1}|\nabla u|^{2}\delta_{\alpha\beta}\psi-\frac{4}{(n-2)^{2}}u^{-\frac{2n}{n-2}}(u_{n}\psi_{n}+u_{\gamma}\psi_{\gamma})\delta_{\alpha\beta}\bigg\}\\
 & +\sum_{s=1}^{k-1}C_{1}(n,k,s)(2k-2s-1)\sigma_{s}\left(A_{g_{u}}^{\mathbb{\mathrm{T}}}\right)h_{g_{u}}^{2k-2s-2}\bigg\{\frac{2}{n-2}\frac{n}{n-2}u^{-\frac{2n-2}{n-2}}u_{n}\psi\\
 & -\frac{2}{n-2}u^{-\frac{n}{n-2}}\psi_{n}\bigg\}\\
=: & -b_{n}^{*}\psi_{n}-a_{\alpha\beta}^{*}\psi_{\alpha\beta}+\sum_{\alpha=1}^{n-1}b_{\alpha}^{*}\psi_{\alpha}+c^{*}\psi,
\end{align*}
where $b_{\alpha}^{*}$ and $c^{*}$ depend on $u_{1},u_{0}$.

Collecting all the coefficients of $\psi_{n}$ in $\ensuremath{\frac{\partial}{\partial t}}(\ensuremath{\mathcal{B}_{k}^{g_{u}}})$,
the definition of $b_{n}^{*}$ can be deduced as follows. 
\begin{align*}
 & C_{1}(n,k,0)(2k-1)h_{g_{u}}^{2k-2}\left(-\frac{2}{n-2}u^{-\frac{n}{n-2}}\right)\\
 & -\frac{2}{n-2}u^{-\frac{n}{n-2}}\sum_{s=1}^{k-1}C_{1}(n,k,s)(2k-2s-1)\sigma_{s}\left(A_{g_{u}}^{\mathbb{\mathrm{T}}}\right)h_{g_{u}}^{2k-2s-2}\\
 & -\frac{4}{(n-2)^{2}}u^{-\frac{2n}{n-2}}u_{n}\sum_{s=1}^{k-1}C_{1}(n,k,s)\frac{\partial}{\partial A_{u,\alpha\beta}^{\mathbb{\mathrm{T}}}}\sigma_{s}\left(A_{g_{u}}^{\mathbb{\mathrm{T}}}\right)\delta_{\alpha\beta}h_{g_{u}}^{2k-2s-1}\\
= & C_{1}(n,k,0)(2k-1)h_{g_{u}}^{2k-2}\left(-\frac{2}{n-2}u^{-\frac{n}{n-2}}\right)\\
 & -\frac{2}{n-2}u^{-\frac{n}{n-2}}\sum_{s=1}^{k-1}C_{1}(n,k,s)(2k-2s-1)\sigma_{s}\left(A_{g_{u}}^{\mathbb{\mathrm{T}}}\right)h_{g_{u}}^{2k-2s-2}\\
 & +\frac{2}{n-2}u^{-\frac{n}{n-2}}\sum_{s=1}^{k-1}C_{1}(n,k,s-1)(2k-2s+1)\sigma_{s-1}\left(A_{g_{u}}^{\mathbb{\mathrm{T}}}\right)h_{g_{u}}^{2k-2s}\\
= & -\frac{2}{n-2}u^{-\frac{n}{n-2}}\sigma_{k-1}\left(A_{g_{u}}^{\mathbb{\mathrm{T}}}\right)=-b_{n}^{*},
\end{align*}
where the first equality holds due to  $-\frac{4}{(n-2)^{2}}u^{-\frac{2n}{n-2}}u_{n}=\frac{2}{n-2}u^{-\frac{n}{n-2}}h_{g_{u}}$
and 
\[
\frac{\partial}{\partial A_{u,\alpha\beta}^{\mathbb{\mathrm{T}}}}\sigma_{s}\left(A_{g_{u}}^{\mathbb{\mathrm{T}}}\right)\delta_{\alpha\beta}=(n-s)\sigma_{s-1}\left(A_{g_{u}}^{\mathbb{\mathrm{T}}}\right).
\]
Now we finish the proof of the {\bf Claim}. 
Since ${g_{u}}\in\Gamma_{k}^{+}$ by the convexity of $\Gamma_k^+$ cone, we know that $\sigma_{s}(A_{g_{u}}^{\mathbb{\mathrm{T}}})>0$
for all $1\le s\le k-1$ and $\bigg\{\frac{\partial\sigma_{s}(A_{g_{u}}^{\mathbb{\mathrm{T}}})}{\partial A_{g_u,\alpha\beta}^{\mathbb{\mathrm{T}}}}\bigg\}_{(n-1)\times (n-1)}$
is positive-definite. Thus, the sign of $\mathcal{B}_{k}^{g_{u}}$
is consistent with $h_{g_{u}}$, keeping the ellipticity of $(\sigma_{k}^{g_{u}},\mathcal{B}_{k}^{g_{u}})$ on the boundary. Moreover,  $\{a_{\alpha\beta}\}$ and $b_n$ are positive and the ellipticity are strict.
\end{proof}

We remark here that the above proof of Lemma \ref{elliptic} also indicates that $\mathcal{B}_{k}^{g_u}$ is monotone with respect to $h_{g_u}$ if $g_u\in \Gamma_k^+$, and by formal computations,  $\frac{\partial \mathcal{B}_{k}^{g_u}}{\partial h_{g_u}}=\sigma_{k-1}(A_{g_u}^{\mathrm{T}})$. To see this,  we first rewrite  \eqref{tangential expression} as  $$A_{g_u}^{\mathrm{T}}=M-\frac 12 h_{g_u}^2\mathbb I_{(n-1)\times (n-1)},$$
where $M$ is a matrix depending only on the information of the tangential derivatives of $u$ on $\partial \mathbb{R}^n_+$ or the metric $g_u$ limiting on boundary.

 Now we find that 
 \begin{align}\label{monotone}
 \frac{\partial \mathcal{B}_{k}^{g_u}}{\partial h_{g_u}}
 =&\sum_{s=0}^{k-1}\frac{(n-1-s)!}{(n-k)!(2k-2s-1)!!}(2k-2s-1)\sigma_{s}\left(A_{g_u}^{\mathbb{\mathrm{T}}}\right)h_{g_u}^{2k-2s-2}\nonumber\\
 &+\sum_{s=0}^{k-1}\frac{(n-1-s)!}{(n-k)!(2k-2s-1)!!}\frac{\partial \sigma_{s}\left(A_{g_u}^{\mathbb{\mathrm{T}}}\right)}{\partial {A_{g_u}^{\mathbb{\mathrm{T}}}}_{,\alpha \beta}}(-h_{g_u}\delta_{\alpha\beta})h_{g_u}^{2k-2s-1}\nonumber\\
 =&\sigma_{k-1}(A_{g_u}^{\mathrm{T}}).
\end{align}

Therefore, if $g_{u}\in\Gamma_{k}^{+}$, then $\mathcal{B}_{k}^{g_{u}}$ is monotone with respect to $h_{g_u}$, and given a prescribed  $\mathcal{B}_{k}^{g_{u}}$ on boundary, the mean curvature $h_{g_u}$ can be uniquely determined once the tangential derivatives of function $u$ on boundary are described.

 In the left paper, we only focus on $g_{u}\in\Gamma_{k}^{+}$, and due to the positivity of $\mathcal{B}_{k}^{g_{u}}$, the corresponding $\{\tilde{a}_{ij}\}$, $\{a_{\alpha\beta}\}$  and $b_n$  are all positive,  ensuring that the strong maximum principle and the Hopf lemma works in the method of moving spheres. The formula \eqref{eq:linearization operator uner g-u^=00003D00007B4/(n-2)=00003D00007D} will be used in Section 3 and Section 4.

\section{Liouville Theorem under the Stronger condition}

In this section, we would like to prove a Liouville theorem in a stronger
assumption (\ref{eq:strong assmption}), which also naturally appears
in half sphere. Some lemmas in this section still work without (\ref{eq:strong assmption}).

In the following, we just consider positive constant $\mathcal{B}_{k}^{g_{u}}$
curvature. We use ${B}_{R}(x)$ to denote the ball in $\mathbb{R}^n$ of radius $R$ and centered at $x$, and write
$B_R = B_R(0)$ and ${B}_{R}^{+}=B_R\cap \mathbb{R}^n_+$. 
\begin{thm}
\label{thm:half space liouville-1}Let $g_{u}=u^{\frac{4}{n-2}}|dx|^{2}$
in $\mathbb{R}_{+}^{n}$ satisfy 
\begin{equation}
\begin{cases}
\sigma_{k}(A_{g_{u}})=2^{k}\binom{n}{k}, \quad g_{u}\in\Gamma_{k}^{+}, \qquad & \mathrm{in}\,\,\mathbb{R}_{+}^{n},\\
\mathcal{B}_{k}^{g_{u}}=c_{0}>0 & \text{\ensuremath{\mathrm{on}}}\,\,\partial\mathbb{R}_{+}^{n}.
\end{cases}\label{eq:main equation-1}
\end{equation}
Assume that 
\begin{equation}
u_{0,1}(x):=|x|^{2-n}u\bigg(\frac{x}{|x|^{2}}\bigg)\text{ can be extended to a positive function in }C^{2}\big(\overline{{B}_{1}^{+}}\big).\label{eq:strong assmption}
\end{equation}
Then, $u$ is the form of (\ref{eq:standard form}). 
\end{thm}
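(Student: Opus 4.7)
The strategy is the method of moving spheres, centered at points $x_0$ of the boundary $\partial\mathbb{R}_+^n$, exploiting the conformal invariance of both the interior equation $\sigma_k(A_{g_u})=2^k C_n^k$ and the boundary condition $\mathcal{B}_k^{g_u}=c_0$ established in Section~2.2. The additional hypothesis (\ref{eq:strong assmption}) furnishes the precise asymptotic expansion $u(y)=u_{0,1}(0)|y|^{2-n}+O(|y|^{1-n})$ as $|y|\to\infty$, which is the essential ingredient both for starting the moving-sphere process and for eventually ruling out blow-up at infinity. Concretely, for each fixed $x_0\in\partial\mathbb{R}_+^n$, this expansion together with the positivity and smoothness of $u$ on $\overline{\mathbb{R}_+^n}$ yields $\lambda_0(x_0)>0$ such that
\[
u_{x_0,\lambda}(y)\le u(y)\quad\text{for every }y\in\overline{\mathbb{R}_+^n}\setminus\overline{B_\lambda(x_0)}\text{ and every }0<\lambda<\lambda_0(x_0).
\]

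Define the critical radius $\bar\lambda(x_0):=\sup\{\lambda>0:u_{x_0,\mu}\le u\text{ on }\overline{\mathbb{R}_+^n}\setminus B_\mu(x_0)\text{ for all }0<\mu\le\lambda\}\in(0,\infty]$. I would first rule out $\bar\lambda(x_0)=\infty$ for all $x_0$: sending $\lambda\to\infty$ in $u\ge u_{x_0,\lambda}$ and combining with the expansion at infinity would force either $u\equiv 0$ or a contradiction with the positivity of $c_0$ via $\mathcal{B}_k^{g_u}=c_0$. Hence $\bar\lambda(x_0)<\infty$ for every boundary point.

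The core of the proof is the rigidity at the critical radius. Set $\psi=u-u_{x_0,\bar\lambda(x_0)}$, so $\psi\ge 0$ on $\overline{\mathbb{R}_+^n}\setminus\overline{B_{\bar\lambda(x_0)}(x_0)}$ with equality somewhere in that closed set. Since both $u$ and $u_{x_0,\bar\lambda(x_0)}$ solve the same interior and boundary equations and the operators involved are elliptic on $\Gamma_k^+$ (interior) and via Lemma~\ref{lem:elliptic on boundary} (boundary, with coefficients (\ref{eq:coeffients of tangential second derivative}) and (\ref{eq:coefficients of first derivative})), $\psi$ satisfies a linear elliptic inequality in the interior and $L\psi=0$ on the boundary of the exterior region. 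The interior strong maximum principle for $\sigma_k$-operators in $\Gamma_k^+$ forces $\psi\equiv 0$ if an interior zero occurs; for a boundary zero, a Hopf-type lemma for $L$ (using $b_n>0$ from (\ref{eq:coefficients of first derivative}), which encodes the ellipticity transverse to $\partial\mathbb{R}_+^n$, and the positive-definiteness of $a_{\alpha\beta}$) forces $\psi\equiv 0$ as well. Thus $u\equiv u_{x_0,\bar\lambda(x_0)}$ for every $x_0\in\partial\mathbb{R}_+^n$, and a standard calibration lemma (from \cite{Li-Zhang}, listed in the Appendix) then pins $u$ down to the explicit form (\ref{eq:standard form}); the algebraic identity in the statement of Theorem~\ref{thm:half space liouville} is recovered by substituting into $\mathcal{B}_k^{g_u}=c_0$.

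The main obstacle I anticipate is the rigidity step. The interior strong maximum principle for $\sigma_k$-equations is classical but must be glued carefully to the non-standard boundary operator $L=-a_{\alpha\beta}\partial_{\alpha\beta}+b_\alpha\partial_\alpha-b_n\partial_n+c$. In particular, the Hopf lemma at a boundary zero of $\psi$ is delicate because $L$ mixes tangential second derivatives with a \emph{single} normal first derivative; the sign $b_n>0$ (granted by $g_u\in\Gamma_k^+$ and the positivity of $\mathcal{B}_k^{g_u}$, equivalently $h_{g_u}>0$) is exactly what converts the boundary identity into a usable Hopf inequality. A secondary technical point is verifying that the asymptotic expansion provided by (\ref{eq:strong assmption}) is sharp enough to both start the moving spheres at small $\lambda$ and to close the argument when $\bar\lambda(x_0)$ hits the threshold set by the coefficient of $|y|^{2-n}$.
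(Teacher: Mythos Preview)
Your overall architecture (moving spheres centered on $\partial\mathbb{R}_+^n$, then a calibration lemma) matches the paper, but there are two genuine gaps.

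\textbf{The rigidity step is not a touching argument.} You assert that at $\bar\lambda(x_0)$ the function $\psi=u-u_{x_0,\bar\lambda(x_0)}$ vanishes ``somewhere in that closed set'' and then invoke strong maximum principle/Hopf. On the unbounded exterior domain this is not how $\bar\lambda$ is characterized: the supremum can be finite with $\psi>0$ everywhere in the open exterior, the obstruction to moving further coming from infinity or from the normal derivative on $\partial B_{\bar\lambda}$. The paper argues by contradiction: assuming $\psi\not\equiv 0$, it proves (i) $\psi>0$ strictly in the open exterior (your Hopf reasoning is used here, for Claim~1), (ii) $\liminf_{|y|\to\infty}|y|^{n-2}\psi(y)>0$ (Claim~2, which is where hypothesis (\ref{eq:strong assmption}) actually enters, via a boundary Hopf argument applied to $u_{0,1}-v$ at the origin), and (iii) $\partial_\nu\psi>0$ on all of $\partial B_{\bar\lambda}\cap\overline{\mathbb{R}_+^n}$, which at the corner $\partial B_{\bar\lambda}\cap\partial\mathbb{R}_+^n$ requires a separate corner Hopf lemma (Lemma~\ref{lem:corner hopf } in the Appendix) adapted to the mixed operator $L$. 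Only then can $\lambda$ be pushed past $\bar\lambda$, yielding the contradiction. Your sketch omits (ii) and (iii) entirely.

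\textbf{The calibration lemma does not finish the proof.} Because the spheres are centered only at boundary points, the identity $u\equiv u_{x_0,\bar\lambda(x_0)}$ holds only for $x_0\in\partial\mathbb{R}_+^n$, so Lemma~\ref{lem:Cal Lemma-entire space} determines $u(x',0)$ on $\mathbb{R}^{n-1}$, not $u$ on $\mathbb{R}_+^n$. The paper then performs a M\"obius transformation sending $\mathbb{R}_+^n$ to a ball $B_{2d}(q)$; the transformed solution $v$ has \emph{constant} Dirichlet data on $\partial B_{2d}(q)$ by the boundary formula just obtained, and the condition $\mathcal{B}_k^{g_v}=c_0$ then forces constant Neumann data as well. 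A standard moving-plane argument makes $v$ radial, and Theorem~\ref{thm:Li-Li-JEMS Thm3} gives the explicit bubble, which transforms back to (\ref{eq:standard form}). This last stretch is not a formality and is missing from your proposal.
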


Theorem \ref{thm:half space liouville-1} will be used in the proof of Theorem \ref{thm:half space liouville} in Section 4.  The strong assumption \eqref{eq:strong assmption} describes the behavior of function $u$ near infinity, from which,  the functions such as ${u}_{x,1}(z)=\frac{1}{|z-x|^{n-2}}u(x+\frac{z-x}{|z-x|^2})$  can  also be extended as a positive function in  $C^2(\overline{{B}_{1}^{+}(x)})$. Since the $\mathcal{B}_{k}^{g}$ boundary condition is
conformally invariant, up to stereographic projection,  we can obtain Corollary \ref{corollary}.  This section is also to give the main ideas of the proof and may be easier to see how the $\mathcal{B}_k$ boundary curvature equations fit the method of moving spheres.  Readers can also see  Li-Li \cite{Li-Li2} for the classical argument for Neumann boundary condition.

For readers' convenience,
we write the following argument for $\mathcal{B}_{k}^{g}$,  which indicates that $\mathcal{B}_{k}^{g}$ is conformally invariant  and  will be frequently used throughout the paper.

Let $z=\varphi_{x,\lambda}(y)=x+\frac{\lambda^{2}(y-x)}{|y-x|^{2}}$
and then $\varphi_{x,\lambda}^{*}(|\text{d}z|^{2})=\frac{\lambda^{4}}{|y-x|^{4}}|\text{d}y|^{2}.$
For $g_{u}=u^{\frac{4}{n-2}}|\text{d}z|^{2}$, 
\begin{align*}
\varphi_{x,\lambda}^{*}(u^{\frac{4}{n-2}}|\text{d}z|^{2}) & =[u\circ\varphi_{x,\lambda}(y)]^{\frac{4}{n-2}}\frac{\lambda^{4}}{|y-x|^{4}}|\text{d}y|^{2}\\
 & =\bigg[\frac{\lambda^{n-2}u\circ\varphi_{x,\lambda}}{|y-x|^{n-2}}\bigg]^{\frac{4}{n-2}}|\text{d}y|^{2}.
\end{align*}
Now we denote $u_{x,\lambda}(y)=\frac{\lambda^{n-2}}{|y-x|^{n-2}}u\left(x+\frac{\lambda^{2}(y-x)}{|y-x|^{2}}\right)$
and from above, we obtain 
\begin{align*}
\varphi_{x,\lambda}^{*}(\mathcal{B}_{k}^{g_{u}}) & =\mathcal{B}_{k}^{\varphi_{x,\lambda}^{*}(g_{u})}=\mathcal{B}_{k}^{\varphi_{x,\lambda}^{*}(u^{\frac{4}{n-2}}|\text{d}z|^{2})}\\
 & =\mathcal{B}_{k}^{u_{x,\lambda}^{\frac{4}{n-2}}|\text{d}y|^{2}}.
\end{align*}
Thus, when $\mathcal{B}_{k}^{g_{u}}=c_{0}$, it yields that $\mathcal{B}_{k}^{u_{x,\lambda}^{\frac{4}{n-2}}|\text{d}y|^{2}}=c_{0}.$

For simplicity $A^{u}:=g_{u}^{-1}A_{g_{u}}$, $\mathcal{B}_{k}^{u}:=\mathcal{B}_{k}^{g_{u}}$
and $\partial'B_{r_{0}}^{+}:=\partial B_{r_{0}}^{+}\backslash\{x|x_{n}=0\}.$
Equation (\ref{eq:main equation-1}) is the same as (\ref{eq:main equation}). 
\begin{lem}
\label{lem:lower bound}Assume that $u$ is a solution to (\ref{eq:main equation-1}).
\textup{For any fixed $r_{0}>0$, when $|y|\ge r_{0}>0$ and }$y_{n}\ge0,$
we have \textup{ 
\[
u(y)\ge(\min_{\partial'B_{r_{0}}^{+}}u)r_{0}^{n-2}|y|^{2-n},
\]
}and

\[
\underset{x\in\mathbb{R}_{+}^{n},|x|\rightarrow+\infty}{\liminf}u|x|^{n-2}>0.
\]
\end{lem}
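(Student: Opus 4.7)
The plan is to compare $u$ with the explicit model function $v(y):=M r_{0}^{n-2}|y|^{2-n}$, where $M:=\min_{\partial'B_{r_{0}}^{+}}u$, on the exterior region $\Omega:=\mathbb{R}_{+}^{n}\setminus\overline{B_{r_{0}}^{+}}$, and conclude via the maximum principle together with Hopf's lemma on the flat portion of $\partial\Omega$.

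The first step is to extract two pointwise ingredients from \eqref{eq:main equation}. Since $g_{u}\in\Gamma_{k}^{+}$, in particular $\sigma_{1}(A_{g_{u}})>0$, and the conformal identity $\sigma_{1}(A_{g_{u}})=-\tfrac{2}{n-2}u^{-(n+2)/(n-2)}\Delta u$ yields that $u$ is strictly superharmonic in $\overline{\mathbb{R}_{+}^{n}}$. On the boundary $\partial\mathbb{R}_{+}^{n}$, which is umbilic for $g_{u}$, formula \eqref{eq:boundary equation} expresses $\mathcal{B}_{k}^{g_{u}}$ as a polynomial in $h_{g_{u}}$ with strictly positive coefficients, using that $g_{u}\in\Gamma_{k}^{+}$ forces $\sigma_{s}(A_{g_{u}}^{\mathrm{T}})>0$ for every $1\leq s\leq k-1$ via the standard interlacing behavior of the Garding cone under principal submatrices. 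As every exponent of $h_{g_{u}}$ in this polynomial is odd and $c_{0}>0$, we conclude $h_{g_{u}}>0$, whence $u_{n}<0$ on $\partial\mathbb{R}_{+}^{n}$ via $h_{g_{u}}=-\tfrac{2}{n-2}u^{-n/(n-2)}u_{n}$.

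Set $w:=u-v$ on $\Omega$. By construction $v$ is harmonic in $\mathbb{R}^{n}\setminus\{0\}$, satisfies $\partial_{n}v\equiv 0$ on $\partial\mathbb{R}_{+}^{n}\setminus\{0\}$, and $v\leq u$ on $\partial'B_{r_{0}}^{+}$. Consequently $-\Delta w\geq 0$ in $\Omega$, $w\geq 0$ on $\partial'B_{r_{0}}^{+}$, and $\partial_{n}w\leq 0$ on the flat part $\{y_{n}=0,\,|y|>r_{0}\}$. I claim $w\geq 0$ throughout $\overline{\Omega}$. Suppose not and pick a minimizing sequence $(y_{k})$ for $w$. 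If $|y_{k}|\to\infty$, then $v(y_{k})\to 0$ while $u(y_{k})>0$, so $\liminf w(y_{k})\geq 0$, a contradiction. Otherwise, up to a subsequence $y_{k}\to y_{0}\in\overline{\Omega}$: an interior limit is excluded by the strong maximum principle (which would force $w$ to be a negative constant, contradicting $w\geq 0$ on $\partial'B_{r_{0}}^{+}$); a limit on $\partial'B_{r_{0}}^{+}$ immediately contradicts $\inf w<0\leq w(y_{0})$; and a limit on the flat part with $|y_{0}|>r_{0}$ is ruled out by Hopf's lemma applied to the superharmonic $w$, which would force $\partial_{n}w(y_{0})>0$ against $\partial_{n}w\leq 0$.

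Therefore $u\geq v$ in $\overline{\Omega}$, which is the first inequality, and letting $|y|\to\infty$ delivers $\liminf u|y|^{n-2}\geq M r_{0}^{n-2}>0$. The one delicate point I expect is running the comparison principle on the unbounded exterior $\Omega$: it relies crucially on the strict positivity $u>0$ together with $v\to 0$ at infinity, and on carefully tracking the sign convention of Hopf's lemma relative to the inward normal $\partial_{n}$ so that $\partial_{n}w\leq 0$ genuinely excludes a boundary minimum on the flat face.
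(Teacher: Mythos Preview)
Your proof is correct and follows essentially the same approach as the paper: both compare $u$ with the harmonic function $M r_{0}^{n-2}|y|^{2-n}$ on the exterior region, using that $-\Delta u\geq 0$ (from $\sigma_{1}(A_{g_{u}})>0$) in the interior and $-u_{n}>0$ (from $h_{g_{u}}>0$) on the flat boundary, and then invoke the maximum principle. Your write-up is more detailed---you spell out why $u$ is superharmonic, why $h_{g_{u}}>0$, and you give an explicit minimizing-sequence argument to handle the unbounded domain and the Hopf contradiction on the flat face---whereas the paper simply asserts the comparison in one line.
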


\begin{proof}
From the assumption, we know that $-\Delta u\ge0$ in $\overline{\mathbb{R}_{+}^{n}}\backslash B_{r_{0}}^{+},$
and $-\frac{\partial u}{\partial x_{n}}>0$ on $\partial\mathbb{R}_{+}^{n}\backslash\overline{B_{r_{0}}^{+}}$.
With the fact $\Delta|y|^{2-n}=0$ in $\mathbb{R}_{+}^{n}$ and $\frac{\partial|y|^{2-n}}{\partial y_{n}}=0$
on $\partial\mathbb{R}_{+}^{n}\backslash\{0\},$ by the maximum principle,
we have 
\[
u(y)\ge(\min_{\partial'B_{r_{0}}^{+}}u)r_{0}^{n-2}|y|^{2-n}\,\,\text{for }|y|\ge r_{0}\,\,\text{and}\,\,y_{n}\ge0.
\]
\end{proof}
For $x\in\partial\mathbb{R}_{+}^{n},\lambda>0$, let $u_{x,\lambda}$
denote the reflection of $u$ with respect to $B_{\lambda}(x)$, i.e.,
\[
u_{x,\lambda}(y):=\left(\frac{\lambda}{|y-x|}\right)^{n-2}u\left(x+\frac{\lambda^{2}(y-x)}{|y-x|^{2}}\right).
\]

By $g_{u}\in\Gamma_{k}^{+}$ and the conformal invariance of $A_{g}\in\Gamma_{k}^{+}$,
we know that $g_{u_{x,\lambda}}\in\Gamma_{k}^{+}\,\text{ on }\overline{B_{1}^{+}}$
and hence $\sigma_{s}(A_{g_{u_{x,\lambda}}}^{\mathbb{\mathrm{T}}})>0$
for $1\le s\le k-1$ and $\bigg\{\frac{\partial\sigma_{s}(A_{g_{u_{x,\lambda}}}^{\mathbb{\mathrm{T}}})}{\partial A_{\alpha\beta}^{\mathbb{\mathrm{T}}}}\bigg\}_{(n-1)\times(n-1)}$
is positive-definite. 

For simplicity, we denote $u_{\lambda}(y)=u_{0,\lambda}(y).$

$\vspace{2pt}$

For $x\in\partial\mathbb{R}_{+}^{n}$, define 
\[
\bar{\lambda}(x):=\sup\left\{ \mu>0\mid u_{x,\lambda}\leq u\text{ in }\overline{\mathbb{R}_{+}^{n}}\backslash B_{\lambda}(x),\quad\forall\,0<\lambda<\mu\right\} .
\]

$\vspace{2pt}$ 

Now we want to show that $\bar{\lambda}(x)>0.$
\begin{lem}
\label{lem:start}Assume that $u$ is a solution to (\ref{eq:main equation-1}).
Then, for any $x\in\partial\mathbb{R}_{+}^{n}$, there exists $\lambda_{0}(x)>0$
such that 
\[
u_{x,\lambda}\leq u\quad\text{\ensuremath{\mathrm{on}} }\,\,\overline{\mathbb{R}_{+}^{n}}\backslash B_{\lambda}(x),\quad\forall\,0<\lambda<\lambda_{0}(x).
\]
\end{lem}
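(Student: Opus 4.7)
The plan is to carry out the standard ``starting step'' of the method of moving spheres in the half space, following Li-Zhu \cite{LZhu}, Li-Zhang \cite{Li-Zhang} and Li-Li \cite{Li-Li2}. Fix $x\in\partial\mathbb{R}_+^n$; since $x_n=0$, the inversion $y\mapsto x+\lambda^2(y-x)/|y-x|^2$ preserves $\overline{\mathbb{R}_+^n}$, so $u_{x,\lambda}$ is a well-defined positive function on $\overline{\mathbb{R}_+^n}\setminus\{x\}$ that agrees with $u$ on $\partial B_\lambda(x)\cap\overline{\mathbb{R}_+^n}$. I will prove $u_{x,\lambda}\le u$ separately on a near region around $x$ and on a far region, then piece the estimates together.

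Near region: pick $R_\ast=R_\ast(x)>0$ small enough that, for every direction $\theta\in S^{n-1}$ with $x+r\theta\in\overline{\mathbb{R}_+^n}$, the function
\begin{equation*}
\phi_\theta(r):=r^{(n-2)/2}u(x+r\theta)
\end{equation*}
is nondecreasing on $[0,R_\ast]$. This is possible because
\begin{equation*}
\phi_\theta'(r)=r^{(n-4)/2}\Bigl[\tfrac{n-2}{2}u(x+r\theta)+r\,\partial_r u(x+r\theta)\Bigr],
\end{equation*}
and $u(x)>0$ together with the $C^{1}$ regularity of $u$ forces the bracket to be positive for $r$ small (this is the classical inversion lemma, recorded, e.g., as Lemma A.1 of \cite{Li-Zhang} in the appendix). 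A direct substitution $s=\lambda^2/r$, with $r\in[\lambda,R_\ast]$ and $s\in(0,\lambda]$, shows that $\phi_\theta(s)\le\phi_\theta(r)$ is exactly the pointwise inequality $u_{x,\lambda}(y)\le u(y)$ at $y=x+r\theta$, which handles the near region once $0<\lambda<R_\ast$.

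Far region: choose $r_0>\max\{|x|+R_\ast,\ 2|x|\}$ so that Lemma \ref{lem:lower bound} supplies $u(y)\ge C_1|y|^{2-n}$ on $\{|y|\ge r_0,\ y_n\ge 0\}$ for some $C_1>0$. For any $y$ with $|y-x|\ge R_\ast$, the reflected argument lies in $\overline{B_{\lambda^2/R_\ast}(x)}\cap\overline{\mathbb{R}_+^n}$, where $u$ is bounded above by some $M=M(x)$ once $\lambda$ is small; hence
\begin{equation*}
u_{x,\lambda}(y)\le M\bigl(\lambda/|y-x|\bigr)^{n-2}.
\end{equation*}
On the bounded annulus $R_\ast\le |y-x|\le r_0+|x|$ the function $u$ admits a positive minimum $c_1$ by continuity, while the right-hand side is $\le M(\lambda/R_\ast)^{n-2}<c_1$ for small $\lambda$. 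On the complementary piece $|y-x|\ge r_0+|x|$ one has $|y|\ge r_0\ge 2|x|$, hence $|y-x|\ge|y|/2$; combined with Lemma \ref{lem:lower bound}, the desired inequality reduces to $M(2\lambda)^{n-2}\le C_1$, which again holds for $\lambda$ small. Taking $\lambda_0(x)$ to be the minimum of the three resulting thresholds completes the proof.

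I do not foresee a genuine obstacle here: this is the routine starting step of the method of moving spheres, and the only mild care required is to choose $R_\ast$ and $r_0$ compatibly so that the near, middle and far estimates overlap and together cover $\overline{\mathbb{R}_+^n}\setminus B_\lambda(x)$ uniformly in $\lambda\in(0,\lambda_0(x))$. Notably, nothing in this argument uses the boundary equation $\mathcal{B}_k^{g_u}=c_0$; the $C^1$-positivity of $u$ on $\overline{\mathbb{R}_+^n}$ and the asymptotic lower bound of Lemma \ref{lem:lower bound} (which itself used only the superharmonicity of $u$ and the sign of $u_{x_n}$ on the boundary) are all that is needed.
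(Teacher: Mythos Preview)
Your proposal is correct and follows essentially the same approach as the paper's proof: the monotonicity of $r^{(n-2)/2}u$ handles the near region, and Lemma~\ref{lem:lower bound} together with the trivial upper bound on $u_{x,\lambda}$ handles the far region. The only cosmetic difference is that the paper takes $x=0$ without loss of generality, which allows a single cut-off radius $r_0$ (your $R_\ast$ and $r_0$ collapse into one) and avoids the intermediate annulus you introduce; both arguments are the standard Li--Zhu/Li--Zhang starting step.
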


\begin{proof}
Without loss of generality, we assume that $x=0.$ As $u\in C^{1}$,
there exists a positive constant $r_{0}$ such that for $0<r<r_{0}$
\begin{align}
\frac{d}{dr}\left(r^{(n-2)/2}u(r,\theta)\right) & =\frac{n-2}{2}r^{\frac{n-4}{2}}u+r^{\frac{n-2}{2}}u_{r}\nonumber \\
 & =r^{\frac{n-4}{2}}\big(\frac{n-2}{2}u+ru_{r}\big)>0.\label{eq:increasing near small r}
\end{align}
For any $\lambda$ satisfying $\lambda<|y|<r_{0},$ 
\[
\bigg|\frac{\lambda^{2}y}{|y|^{2}}\bigg|=\frac{\lambda^{2}}{|y|}<|y|<r_{0},
\]
and then, by (\ref{eq:increasing near small r}) 
\[
|y|^{(n-2)/2}u(|y|,\theta)>\bigg|\frac{\lambda^{2}y}{|y|^{2}}\bigg|^{(n-2)/2}u\left(\bigg|\frac{\lambda^{2}y}{|y|^{2}}\bigg|,\theta\right).
\]
Therefore, for $0<\lambda<|y|<r_{0},$

\begin{equation}
u(y)>u_{\lambda}(y)=\left(\frac{\lambda}{|y|}\right)^{n-2}u\left(\frac{\lambda^{2}y}{|y|^{2}}\right).\label{eq:annular domain}
\end{equation}
Taking $\lambda_{0}=\bigg(\frac{\min_{\partial'B_{r_{0}}^{+}}u}{\max_{\overline{B_{r_{0}}^{+}}}u}\bigg)^{\frac{1}{n-2}}r_{0}$,
for $0<\lambda<\lambda_{0}$ and $|y|\ge r_{0}$, 
\begin{align*}
u_{\lambda}(y) & \le\left(\frac{\lambda_{0}}{|y|}\right)^{n-2}\max_{\overline{B_{\lambda^{2}/r_{0}}^{+}}}u\le\left(\frac{\lambda_{0}}{|y|}\right)^{n-2}\max_{\overline{B_{r_{0}}^{+}}}u\le\frac{r_{0}^{n-2}\min_{\partial'B_{r_{0}}^{+}}u}{|y|^{n-2}}\le u(y),
\end{align*}
where the first inequality is due to the definition of $u_{\lambda}$
and the last inequality holds due to Lemma \ref{lem:lower bound}.

Combining with (\ref{eq:annular domain}), we know that for $0<\lambda<\lambda_{0}$
and $y\in\overline{\mathbb{R}_{+}^{n}}\backslash B_{\lambda}(x),$
\[
u_{\lambda}(y)\le u(y).
\]
\end{proof}
From Lemma \ref{lem:start}, $\bar{\lambda}(x)>0$. Now we
want to prove that $\bar{\lambda}(x)<\infty.$
\begin{lem}
Given the same assumptions as Theorem \ref{thm:half space liouville-1},
we have $\bar{\lambda}(x)<\infty$ for any $x\in\partial\mathbb{R}_{+}^{n}$. 
\end{lem}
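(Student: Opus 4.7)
The plan is to argue by contradiction: suppose there is some $x_0\in\partial\mathbb{R}_+^n$ for which $\bar\lambda(x_0)=\infty$, which means $u_{x_0,\lambda}\le u$ on $\overline{\mathbb{R}_+^n}\setminus B_\lambda(x_0)$ for every $\lambda>0$. The strategy is to convert this unbounded family of inversion inequalities into a quantitative lower bound on the decay of $u$ at infinity, and then show that the strong assumption~\eqref{eq:strong assmption} forces the decay of $u$ to be exactly of order $|y|^{2-n}$ with a finite leading coefficient, giving a contradiction.

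First I would exploit the assumption that $u_{0,1}$ extends to a positive $C^2$ function on $\overline{\mathbb{B}_1^+}$. Writing $y=x/|x|^2$, one has $u(y)|y|^{n-2}=u_{0,1}(x)$, so letting $|y|\to\infty$ corresponds to $x\to 0$, and continuity plus positivity of the extension give
\[
\lim_{|y|\to\infty,\,y_n\ge 0}|y|^{n-2}u(y)=u_{0,1}(0)\in(0,\infty).
\]
Next, fix any $\lambda>0$. From the inversion formula
\[
u_{x_0,\lambda}(y)=\Bigl(\frac{\lambda}{|y-x_0|}\Bigr)^{n-2}u\!\left(x_0+\frac{\lambda^2(y-x_0)}{|y-x_0|^2}\right),
\]
and the fact that the argument of $u$ on the right converges to $x_0$ as $|y|\to\infty$, the continuity of $u$ at $x_0\in\partial\mathbb{R}_+^n$ yields the asymptotic
\[
u_{x_0,\lambda}(y)=\lambda^{n-2}|y|^{2-n}\bigl(u(x_0)+o(1)\bigr)\quad\text{as }|y|\to\infty.
\]
Combining this with $u_{x_0,\lambda}\le u$ on $\overline{\mathbb{R}_+^n}\setminus B_\lambda(x_0)$ produces
\[
\liminf_{|y|\to\infty,\,y_n\ge 0}|y|^{n-2}u(y)\ge \lambda^{n-2}u(x_0).
\]

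Since $u(x_0)>0$ (by positivity of $u$ up to the boundary, which follows from $u=u_{0,1}(y/|y|^2)|y|^{2-n}$ together with $u_{0,1}>0$, and since $g_u\in\Gamma_k^+$ ensures $u>0$ classically), we may let $\lambda\to\infty$ to deduce
\[
\liminf_{|y|\to\infty,\,y_n\ge 0}|y|^{n-2}u(y)=+\infty,
\]
which directly contradicts the finite limit $u_{0,1}(0)$ derived above. Hence $\bar\lambda(x)<\infty$ for every $x\in\partial\mathbb{R}_+^n$. The only delicate point is the asymptotic expansion of $u_{x_0,\lambda}$ for fixed $\lambda$ as $|y|\to\infty$ and the uniformity in $\lambda$; both are straightforward once continuity of $u$ at boundary points is in hand. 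The rest is a clean contradiction between the super-decay forced by $\bar\lambda=\infty$ and the precise $|y|^{2-n}$ decay rate supplied by~\eqref{eq:strong assmption}.
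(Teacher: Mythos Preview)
Your proof is correct and follows essentially the same approach as the paper: both use that assumption~\eqref{eq:strong assmption} forces $|y|^{n-2}u(y)$ to stay bounded (in fact to converge) as $|y|\to\infty$, while the inversion inequality $u_{x,\lambda}\le u$ yields $\lambda^{n-2}u(x)\le\limsup_{|y|\to\infty}|y|^{n-2}u(y)$, so $\bar\lambda(x)$ must be finite. The paper presents this directly rather than by contradiction, but the content is the same.
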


\begin{proof}
As $u_{0,1}$ can be extended to a positive continuous function near
zero, we have 
\[
|x|^{2-n}u\left(\frac{x}{|x|^{2}}\right)\rightarrow\alpha_{0}\quad\text{as}\quad x\rightarrow0.
\]
And then there exist positive constants $\alpha_{1}$ and $r_{1}$
such that 
\[
u(y)\le\frac{\alpha_{1}}{|y|^{n-2}}\quad\text{for}\,\,|y|\ge r_{1}.
\]
By the definition of $\bar{\lambda}$, for any $0<\lambda<\bar{\lambda}$,  we have 
\[
\lambda^{n-2}u(x)=\lim_{y\rightarrow\infty}\lambda^{n-2}u\left(x+\frac{\lambda^{2}(y-x)}{|y-x|^{2}}\right)\le\lim_{y\rightarrow\infty}|y-x|^{n-2}u(y)\le\alpha_{1}.
\]
Thus, 
\[
\bar{\lambda}(x)\le C.
\]
\end{proof}
\begin{lem}
\label{lem:boundary kelvin equal}Given the same assumptions as Theorem
\ref{thm:half space liouville-1}, we have that, for all $x\in\partial\mathbb{R}_{+}^{n}$,
\[
u_{x,\bar{\lambda}(x)}\equiv u\quad\text{ \ensuremath{\mathrm{in}} }\mathbb{R}_{+}^{n}\backslash\{x\}.
\]
\end{lem}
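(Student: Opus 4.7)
The approach is the concluding step of a moving--spheres argument: assume for contradiction that $\psi:=u-u_{x,\bar\lambda(x)}$ is not identically zero on $\overline{\mathbb{R}_+^n}\setminus\overline{B_{\bar\lambda(x)}(x)}$, promote the known $\psi\ge0$ to strict positivity up to both the sphere and the flat boundary by strong maximum principles and Hopf lemmas, and then enlarge $\bar\lambda(x)$ slightly to contradict its maximality.

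First I would set $\lambda_0:=\bar\lambda(x)$ and recall from Section~2 that $u_{x,\lambda_0}$ also solves (\ref{eq:main equation}) in $\overline{\mathbb{R}_+^n}\setminus\{x\}$, and that the inequality $u_{x,\lambda}\le u$ passes to the limit $\lambda\nearrow\lambda_0$ so that $\psi\ge0$. Following the integration trick of Lemma~\ref{lem:elliptic on boundary}, along a suitable path $u_t$ joining $u_{x,\lambda_0}$ to $u$ and remaining in $\Gamma_k^+$, the interior equation produces a linear elliptic equation $\mathcal{L}\psi=0$ on $\mathbb{R}_+^n\setminus\overline{B_{\lambda_0}(x)}$ (ellipticity being inherited from the linearization of $\sigma_k$ in $\Gamma_k^+$), while Lemma~\ref{lem:elliptic on boundary} itself yields $L\psi=0$ on $\partial\mathbb{R}_+^n\setminus\overline{B_{\lambda_0}(x)}$.

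Next I would promote $\psi\ge0$ to strict positivity. The interior strong maximum principle for $\mathcal{L}$ gives either $\psi\equiv0$, in which case we are done, or $\psi>0$ in $\mathbb{R}_+^n\setminus\overline{B_{\lambda_0}(x)}$; assume the latter. If $\psi(y_0)=0$ at some $y_0\in\partial\mathbb{R}_+^n\setminus\overline{B_{\lambda_0}(x)}$, then $y_0$ minimizes $\psi|_{\partial\mathbb{R}_+^n}$, so $\overline{\nabla}\psi(y_0)=0$ and $\overline{\nabla}^2\psi(y_0)\ge0$, while the half-space Hopf lemma forces $\psi_n(y_0)>0$. Because $g_u\in\Gamma_k^+$ together with $h_{g_u}>0$ makes $\{a_{\alpha\beta}\}$ positive definite and $b_n>0$, plugging these signs into (\ref{eq:linearization operator uner g-u^=00007B4/(n-2)=00007D}) would force $L\psi(y_0)\le -b_n\psi_n(y_0)<0$, contradicting $L\psi(y_0)=0$. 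Hence $\psi>0$ on all of $\overline{\mathbb{R}_+^n}\setminus\overline{B_{\lambda_0}(x)}$, and a further Hopf application on the sphere $\partial B_{\lambda_0}(x)$ provides $\partial_r\psi>0$ there.

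For the final enlargement I would exploit (\ref{eq:strong assmption}), which supplies the sharp asymptotic $u(y)|y|^{n-2}\to u_{0,1}(0)>0$ as $|y|\to\infty$, together with $u_{x,\lambda}(y)|y|^{n-2}\to\lambda^{n-2}u(x)$. The strict inequality $\psi>0$ at $\lambda=\lambda_0$ forces $\lambda_0^{n-2}u(x)<u_{0,1}(0)$ (if equality held, the asymptotic comparison combined with Lemma~\ref{lem:lower bound} would force $\psi\equiv0$), so the same strict inequality persists for $\lambda\in(\lambda_0,\lambda_0+\varepsilon)$. On a fixed compact annulus $\{\lambda_0+\delta\le|y-x|\le R\}\cap\overline{\mathbb{R}_+^n}$, continuity and $\psi>0$ give $u_{x,\lambda}<u$ for $\lambda$ close to $\lambda_0$; on the thin shell just outside $\partial B_\lambda(x)$ the Hopf sign of $\partial_r\psi$ yields the same conclusion via a standard perturbation; and near infinity the asymptotic comparison closes the argument, producing $u_{x,\lambda}\le u$ for some $\lambda>\lambda_0$, contradicting $\lambda_0=\bar\lambda(x)$. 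I expect the main obstacle to lie in the boundary Hopf step: it depends crucially on the sign of $b_n$ dictated by $g_u\in\Gamma_k^+$ and $h_{g_u}>0$, and without this sign the boundary maximum principle fails and the moving--spheres method breaks down at $\partial\mathbb{R}_+^n$.
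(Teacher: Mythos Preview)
Your overall architecture matches the paper's: linearize both the interior and boundary equations, upgrade $\psi=u-u_{x,\bar\lambda(x)}\ge0$ to strict positivity, secure a positive outward normal derivative on the sphere, and then push $\lambda$ past $\bar\lambda(x)$. Two steps, however, are not carried out correctly.

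\textbf{The asymptotic strict inequality.} You assert that ``if equality held, the asymptotic comparison combined with Lemma~\ref{lem:lower bound} would force $\psi\equiv0$.'' Lemma~\ref{lem:lower bound} is only a lower bound $u(y)\ge c|y|^{2-n}$; it has no rigidity content and does not prevent $|y|^{n-2}\psi(y)\to0$ while $\psi>0$ on every compact set. The paper does not argue this way. Instead, it Kelvin-transforms both $u$ and $u_{\bar\lambda}$ about the origin, obtaining $u_{0,1}$ and $v(z)=\bar\lambda^{\,n-2}u(\bar\lambda^2 z)$, which are $C^2$ up to and including $z=0$ \emph{precisely by assumption} (\ref{eq:strong assmption}). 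If $(u_{0,1}-v)(0)=0$, then $0$ is a boundary minimum; the interior Hopf lemma gives $\partial_n(u_{0,1}-v)(0)>0$, while the tangential Hessian is $\ge0$ and the tangential gradient vanishes. Plugging into the boundary operator $L$ then yields $0=L(u_{0,1}-v)(0)\le -b_n\,\partial_n(u_{0,1}-v)(0)<0$, a contradiction. This is the only place in the proof where the \emph{full} $C^2$-extension of $u_{0,1}$ is used, and your version bypasses it without a substitute.

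\textbf{The corner $\partial B_{\bar\lambda}(x)\cap\partial\mathbb{R}_+^n$.} Your sentence ``a further Hopf application on the sphere $\partial B_{\lambda_0}(x)$ provides $\partial_r\psi>0$ there'' covers only $\partial B_{\lambda_0}(x)\cap\mathbb{R}_+^n$. At the corner circle $\partial B_{\lambda_0}(x)\cap\partial\mathbb{R}_+^n$ one cannot invoke the ordinary Hopf lemma, and without a uniform positive lower bound for $\partial_\nu\psi$ on the \emph{closed} hemisphere the ``thin-shell'' perturbation step does not go through. The paper handles this with a corner Hopf lemma (Lemma~\ref{lem:corner hopf }), applied with the interior operator $\mathbb{L}$ and the boundary operator $L$, to obtain $\partial_\nu(u-u_{\bar\lambda})>0$ on $\partial B_{\bar\lambda}\cap\partial\mathbb{R}_+^n$ as well; this, together with the interior Hopf estimate, yields the uniform bound $\partial_\nu(u-u_{\bar\lambda})>b>0$ on all of $\partial B_{\bar\lambda}\cap\overline{\mathbb{R}_+^n}$ that your continuity argument actually needs.
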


\begin{proof}
We argue by contradiction. Note that the assumption \eqref{eq:strong assmption} actually describes the behavior of $u$ near infinity and the functions such as ${u}_{x,1}(z)=\frac{1}{|z-x|^{n-2}}u(x+\frac{z-x}{|z-x|^2})$  can  also be extended as a positive function in  $C^2(\overline{{B}_{1}^{+}(x)})$. Now,  without loss of generality,
we take $x=0$ and $u_{0,\bar{\lambda}}\not\equiv u$ in $\mathbb{R}_{+}^{n}\backslash\{0\}$,
where $\bar{\lambda}:=\bar{\lambda}(0)$ (by abuse of notation). We know
that $u_{\bar{\lambda}}\le u$ on $\overline{\mathbb{R}_{+}^{n}}\backslash B_{\bar{\lambda}}$
by the definition of $\bar{\lambda}.$

We know $\sigma_{k}(A^{u_{\bar{\lambda}}})=2^{k}\binom{n}{k}$ on $\overline{\mathbb{R}_{+}^{n}}\backslash\overline{B_{\bar{\lambda}}^{+}}$
and $\mathcal{B}_{k}^{u_{\bar{\lambda}}}=c_{0}\,\,\text{ on }\partial\mathbb{R}_{+}^{n}\backslash B_{\bar{\lambda}}.$
Letting $w=tu+(1-t)u_{\bar{\lambda}}$, we know that $u-u_{\bar{\lambda}}$
satisfies

\[
\begin{cases}
0=F(A^{u})-F(A^{u_{\bar{\lambda}}})=\mathbb{L}(u-u_{\bar{\lambda}}) & \text{in}\,\,\overline{\mathbb{R}_{+}^{n}}\backslash B_{\bar{\lambda}},\\
0=\mathcal{B}_{k}^{u}-\mathcal{B}_{k}^{u_{\bar{\lambda}}}=L(u-u_{\bar{\lambda}}) & \text{on}\,\,\partial\mathbb{R}_{+}^{n}\backslash B_{\bar{\lambda}},
\end{cases}
\]
where 
\[
\mathbb{L}\varphi:=-A_{ij}\partial_{ij}\varphi+B_{i}\partial_{i}\varphi+C(x)\varphi,
\]

\[
A_{ij}:=\frac{2}{n-2}\int_{0}^{1}w^{-\frac{n+2}{n-2}}\frac{\partial\sigma_{k}}{\partial A_{ij}^{w}}\left(g_{w}^{-1}A^{w}\right)dt,
\]
and $L\varphi$ is defined in (\ref{eq:linearization operator uner g-u^=00003D00007B4/(n-2)=00003D00007D})
with $u_{0}=u_{\bar{\lambda}}$ and $u_{1}=u$ in (\ref{eq:linearization operator uner g-u^=00003D00007B4/(n-2)=00003D00007D}).
We first provide two claims for later use. 

\emph{Claim 1:} 
\begin{equation}
u-u_{\bar{\lambda}}>0\text{ on }\overline{\mathbb{R}_{+}^{n}}\backslash\overline{B_{\bar{\lambda}}}.\label{eq:Strict positive for moving forward a little}
\end{equation}

\emph{Proof of Claim 1}: We argue by the contradiction and assume
$(u-u_{\bar{\lambda}})(\overline{x})=0$ for some $\overline{x}\in\overline{\mathbb{R}_{+}^{n}}\backslash\overline{B_{\bar{\lambda}}}$.
It holds that 
\begin{equation}
\begin{cases}
\mathbb{L}(u-u_{\bar{\lambda}})=0 & \text{in\,}\,\overline{\mathbb{R}_{+}^{n}}\backslash B_{\bar{\lambda}},\\
L(u-u_{\bar{\lambda}})=0 & \text{on}\,\,\partial\mathbb{R}_{+}^{n}\backslash B_{\bar{\lambda}}.
\end{cases}\label{eq:difference equation}
\end{equation}

If $\overline{x}\in\mathbb{R}_{+}^{n}\backslash\overline{B_{\bar{\lambda}}}$,
then by the strong maximum principle, $u-u_{\bar{\lambda}}=0$ near
$\overline{x},$ and it follows that $u=u_{\overline{\lambda}}$ on
$\overline{\mathbb{R}_{+}^{n}}$, leading to a contradiction.

If $\overline{x}$ is on the boundary, then from Hopf Lemma that $\partial_{n}(u-u_{\bar{\lambda}})>0$
at $\overline{x}$. Moreover, at $\overline{x}$, 
\[
\partial_{\alpha\beta}(u-u_{\bar{\lambda}})\ge0,\,\,\partial_{\alpha}(u-u_{\bar{\lambda}})=0,\,\,u-u_{\bar{\lambda}}=0,
\]
and then 
\begin{align*}
0 & =L(u-u_{\bar{\lambda}})\\
 & =-a_{\alpha\beta}\partial_{\alpha\beta}(u-u_{\bar{\lambda}})+b_{\alpha}\partial_{\alpha}(u-u_{\bar{\lambda}})-b_{n}\partial_{n}(u-u_{\bar{\lambda}})+c(u-u_{\bar{\lambda}})\\
 & \le-b_{n}\partial_{n}(u-u_{\bar{\lambda}})<0.
\end{align*}
 A contradiction! We have proved Claim 1.

\emph{Claim 2: } 
\begin{equation}
\lim_{y\in\overline{\mathbb{R}_{+}^{n}},|y|\rightarrow\infty}|y|^{n-2}\left(u(y)-u_{\bar{\lambda}}(y)\right)>0.\label{eq:asymptotic infinity behavior}
\end{equation}
\emph{ Proof of Claim 2}: Let $x=y/|y|^{2},$ we have 
\[
|y|^{n-2}u(y)=u_{0,1}(x),\quad|y|^{n-2}u_{\bar{\lambda}}(y)=\bar{\lambda}^{n-2}u\left(\frac{\bar{\lambda}^{2}y}{|y|^{2}}\right)=\bar{\lambda}^{n-2}u\left(\bar{\lambda}^{2}x\right)=:v(x).
\]

By (\ref{eq:Strict positive for moving forward a little}), $u_{0,1}-v>0$
in $B_{1/\bar{\lambda}}^{+}.$ We prove that $(u_{0,1}-v)(0)>0$ by
a contradiction argument and then the Claim 2 will be proved. Otherwise,
as $u_{0,1}(x)$ and $v(x)$ satisfy the equation (\ref{eq:main equation}),
by (\ref{eq:strong assmption}) and Hopf Lemma, we know \footnote{Here we used the assumption that $u_{0,1}(x)$ is $C^{2}$ near $0$.} 
\[
\min_{\overline{B_{1/\bar{\lambda}}^{+}}}(u_{0,1}-v)=(u_{0,1}-v)(0)=0,
\]
and \[\partial_{\alpha}(u_{0,1}-v)(0)=0,\,\,\partial_{\alpha\beta}(u_{0,1}-v)(0)\ge0, \partial_{n}(u_{0,1}-v)(0)>0.\]
Thus, a contradiction follows from 
\begin{align*}
0 & =L(u_{0,1}-v)(0)\\
 & =-a_{\alpha\beta}\partial_{\alpha\beta}(u_{0,1}-v)+b_{\alpha}\partial_{\alpha}(u_{0,1}-v)-b_{n}\partial_{n}(u_{0,1}-v)+c(u_{0,1}-v)\\
 & \le-b_{n}\partial_{n}(u_{0,1}-v)<0.
\end{align*}
This completes the proof of Claim 2.

$\vspace{2pt}$

Since $u-u_{\bar{\lambda}}=0$ on $\partial B_{\bar{\lambda}}\cap\mathbb{R}_{+}^{n}$
from the definition of $u_{\bar{\lambda}}$, by the Hopf Lemma and
Claim 1, 
\begin{equation}
(u-u_{\bar{\lambda}})_{\nu}>0,\label{eq:move further-condition1}
\end{equation}
where $\nu$ denotes the outward unit  normal to $\partial B_{\bar{\lambda}}\cap\mathbb{R}_{+}^{n}.$

$\vspace{1pt}$

For any $\overline{x}\in\partial B_{\bar{\lambda}}\cap\partial\mathbb{R}_{+}^{n}$,
taking $\Omega=\big(\mathbb{R}_{+}^{n}\backslash B_{\bar{\lambda}}\big)\cap B_{1}(\overline{x})$
and $\sigma=x_{n}$.  In  $\Omega$, there exists a positive constant $A$ such that $C(x)\le A$ and $c(x)\le A$. And then
\begin{align*}
\mathscr{P}(u-u_{\bar{\lambda}})&=\mathbb{L}(u-u_{\bar{\lambda}})-C(x)(u-u_{\bar{\lambda}})\\
&=-C(x)(u-u_{\bar{\lambda}})\ge -A(u-u_{\bar{\lambda}}),\quad\text{in}\,\,\Omega
\end{align*}
and 
\begin{align*}
\mathscr{L}(u-u_{\bar{\lambda}})&=L(u-u_{\bar{\lambda}})-c(x)(u-u_{\bar{\lambda}})\\
&=-c(x)(u-u_{\bar{\lambda}})\ge-A(u-u_{\bar{\lambda}}),\quad\text{on}\,\,\{\sigma=0\}\cap\overline{\Omega}.
\end{align*}
By Lemma \ref{lem:corner hopf } in Appendix, we know that $\frac{\partial}{\partial\nu}(u-u_{\bar{\lambda}})\big|_{x=\overline{x}}>0$,
where $\nu$ denotes the unit outer normal to $\partial B_{\bar{\lambda}}$
on $\partial\mathbb{R}_{+}^{n}.$ Thus, we have 
\begin{equation}
\frac{\partial}{\partial\nu}(u-u_{\bar{\lambda}})>0\quad\text{on}\quad\partial B_{\bar{\lambda}}\cap\partial\mathbb{R}_{+}^{n},\label{eq:move further-condition1-1}
\end{equation}
where $\nu$ denotes the unit outer normal to $\partial B_{\bar{\lambda}}$
on $\partial\mathbb{R}_{+}^{n}.$

Therefore, from (\ref{eq:move further-condition1})(\ref{eq:move further-condition1-1}),
there exists a positive constant $b>0$ such that 
\[
\frac{\partial}{\partial\nu}(u-u_{\bar{\lambda}})>b>0\quad\text{on}\quad\partial B_{\bar{\lambda}}\cap\overline{\mathbb{R}_{+}^{n}}.
\]
By the continuity of $\nabla u$, there exists a $R>\bar{\lambda}$
such that for $\bar{\lambda}\le\lambda\le R$, $\lambda\le|x|\le R$,
we have 
\[
\frac{\partial}{\partial\nu}(u-u_{\lambda})\bigg|_{x}>\frac{b}{2},
\]
where $\nu$ is the unit outer normal to $\partial B_{\lambda}.$
For $\bar{\lambda}\le\lambda\le R$ and $\lambda<|y|\le R$, as $u-u_{\lambda}=0$
on $\partial B_{\lambda}$, from above, 
\begin{equation}
u(y)-u_{\lambda}(y)>0.\label{eq:move further more in small domain}
\end{equation}

By Claim 2, we know that there exists a positive constant $C_{0}>0$
such that for $|y|\ge R$ and $y_{n}\ge0$, 
\[
u(y)-u_{\bar{\lambda}}(y)\ge\frac{C_{0}}{|y|^{n-2}}.
\]
Furthermore, for $|y|\ge R,$ 
\begin{align}
u(y)-u_{\lambda}(y) & \ge\frac{C_{0}}{|y|^{n-2}}-(u_{\lambda}-u_{\bar{\lambda}})\nonumber \\
 & =\frac{C_{0}}{|y|^{n-2}}-\frac{1}{|y|^{n-2}}\bigg(\lambda^{n-2}u\left(\frac{\lambda^{2}y}{|y|^{2}}\right)-\bar{\lambda}^{n-2}u\left(\frac{\bar{\lambda}^{2}y}{|y|^{2}}\right)\bigg)\nonumber \\
 & \ge\frac{1}{|y|^{n-2}}\bigg[C_{0}-\lambda^{n-2}u\left(\frac{\lambda^{2}y}{|y|^{2}}\right)+\bar{\lambda}^{n-2}u\left(\frac{\bar{\lambda}^{2}y}{|y|^{2}}\right)\bigg]\nonumber \\
 & \ge\frac{\frac{C_{0}}{2}}{|y|^{n-2}},\label{eq:move further more in large domain}
\end{align}
where the last inequality holds as we takes $\lambda$ close to $\bar{\lambda}$.
By (\ref{eq:move further more in small domain}) and (\ref{eq:move further more in large domain}),
there exists some small $\varepsilon>0$ such that for $\bar{\lambda}\le\lambda\le\bar{\lambda}+\varepsilon$
and $|y|\ge\lambda,$ we have $u(y)-u_{\lambda}(y)>0$. It will yield
a contradiction to the largest number of $\bar{\lambda}.$ Therefore,
$u_{\bar{\lambda}}=u$ in $\overline{\mathbb{R}_{+}^{n}}\backslash B_{\bar{\lambda}}$
and thus $u_{\bar{\lambda}}=u$ in $\overline{\mathbb{R}_{+}^{n}}\backslash\{0\}$. 
\end{proof}

\bigskip

\begin{proof}[\textbf{Proof of Theorem \ref{thm:half space liouville-1}}]
Now by Lemma \ref{lem:Cal Lemma-entire space} in Appendix and Lemma
\ref{lem:boundary kelvin equal}, we know that on $\partial\mathbb{R}_{+}^{n}$,
there exist $a>0$, $d>0$, and $x_{0}'\in\partial\mathbb{R}_{+}^{n}$
such that 
\begin{equation}
u(x',0)=\left(\frac{a}{d^{2}+|x'-x_{0}'|^{2}}\right)^{\frac{n-2}{2}}.\label{eq:function on boundary}
\end{equation}

Let $p=\left(x_{0}^{\prime},-d\right)$ and $q=\left(x_{0}^{\prime},d\right)$,
and define 
\[
y=(y',y_{n})=p+\frac{4d^{2}(z-p)}{|z-p|^{2}}:B_{2d}(q)\longrightarrow\mathbb{R}_{+}^{n}
\]
and 
\[
v(z):=\left(\frac{2d}{|z-p|}\right)^{n-2}u\left(p+\frac{4d^{2}(z-p)}{|z-p|^{2}}\right).
\]
Now $v$ satisfies 
\begin{equation}
\begin{cases}
\sigma_{k}(A_{g_{v}})=2^{k}\binom{n}{k} \qquad & \text{in}\,\,B_{2d}(q),\\
\mathcal{B}_{k}^{g_{v}}=c_{0} & \text{on}\,\,\partial B_{2d}(q).
\end{cases}\label{eq:radial solution condition}
\end{equation}

We know that $\varphi(\partial B_{2d}(q))=\partial\mathbb{R}_{+}^{n}$ where
$\varphi(z)=p+\frac{4d^{2}(z-p)}{|z-p|^{2}}.$ Moreover, when $z\in\partial B_{2d}(q)$,
we have $y=(y',0)$ and 
\begin{align}
v(z) & =\left(\frac{2d}{|z-p|}\right)^{n-2}u\left(p+\frac{4d^{2}(z-p)}{|z-p|^{2}}\right)\label{eq:dirichlet boundary}\\
 & =\left(\frac{2d}{|z-p|}\right)^{n-2}u(y',0)\nonumber \\
 & =\left(\frac{2d}{|z-p|}\right)^{n-2}\left(\frac{a}{d^{2}+|y'-x_{0}'|^{2}}\right)^{\frac{n-2}{2}}\nonumber \\
 & =a^{\frac{n-2}{2}}\left(\frac{2d}{|z-p|}\right)^{n-2}\frac{1}{|y-p|^{n-2}}\nonumber \\
 & =a^{\frac{n-2}{2}}(2d)^{2-n},\nonumber 
\end{align}
where we use (\ref{eq:function on boundary}) in the third equality
and the last equality holds due to $|y-p||z-p|=4d^{2}$ for $z\in\partial B_{2d}(q)$.
Notice that the expression of \eqref{eq:boundary equation} consists of the tangential derivatives and the first normal derivatives on boundary.  By  (\ref{eq:dirichlet boundary}), we have the tangential information of $v$ on $\partial B_{2d}(q)$ and then by \eqref{eq:boundary equation}, we can have the first normal derivative on boundary. 
Actually, from (\ref{eq:dirichlet boundary})\eqref{monotone} and $\mathcal{B}_{k}^{g_{v}}=c_{0}$
on $\partial B_{2d}(q)$, we obtain that $\left<\nabla v,\nu\right>$
is constant on $\partial B_{2d}(q)$, where $\nu$ is the unit outer
normal vector on $\partial B_{2d}(q)$. With constant Dirichlet and
Neumann boundary on $\partial B_{2d}(q)$, the solution $v$ to $\sigma_{k}(A_{g_{v}})=2^{k}\binom{n}{k}$
on $B_{2d}(q)$ is radial by the standard moving plane argument. Then,
by Theorem \ref{thm:Li-Li-JEMS Thm3} in Appendix, we conclude that
\[
v(z)=\bigg(\frac{\sqrt{b}}{1+b|z-q|^{2}}\bigg)^{\frac{n-2}{2}}
\]
satisfying (\ref{eq:radial solution condition}). Thus, we obtain the
form of (\ref{eq:standard form}) by transforming back to $u,$ which
means 
\[
u(y)=\bigg(\frac{2d}{|y-p|}\bigg)^{n-2}v\left(p+\frac{4d^{2}(y-p)}{|y-p|^{2}}\right).
\]
\end{proof}

\section{Proof of Theorem \ref{thm:half space liouville}}

In this section,   under the assumptions of Theorem \ref{thm:half space liouville}, we will utilize the beautiful lemma in \cite{Li-Li}
to  obtain (\ref{eq:strong assmption}) and thereby, we  complete
the proof of Theorem \ref{thm:half space liouville}.
\begin{lem}
\label{lem: how far can bar=00003D00007B=00003D00005Clambda=00003D00007D be?}Assume
as Theorem \ref{thm:half space liouville}. Denote $\alpha:=\lim\inf_{x\in\overline{\mathbb{R}_{+}^{n}},|x|\rightarrow+\infty}|x|^{n-2}u.$
We have $0<\alpha<+\infty$, and 
\[
\bar{\lambda}(x)^{n-2}u(x)=\alpha,\quad \forall~ x\in\partial\mathbb{R}_{+}^{n}.
\]
\end{lem}

\begin{proof}
By definition of $\bar{\lambda}(x)$ we have 
\[
u_{x,\lambda}\leq u\quad\text{ in }\overline{\mathbb{R}_{+}^{n}}\backslash B_{\lambda}(x),\quad\forall\,0<\lambda<\bar{\lambda}(x).
\]
For $\forall\,0<\lambda<\bar{\lambda}(x),$ 
\begin{align*}
\lambda^{n-2}u(x) & =\liminf_{y\rightarrow+\infty,y\in\overline{\mathbb{R}_{+}^{n}}}|y|^{n-2}\bigg(\frac{\lambda}{|y-x|}\bigg)^{n-2}u\left(x+\frac{\lambda^{2}(y-x)}{|y-x|^{2}}\right)\\
 & \le\liminf_{y\rightarrow+\infty,y\in\overline{\mathbb{R}_{+}^{n}}}|y|^{n-2}u(y)=\alpha.
\end{align*}
Then $\bar{\lambda}(x)^{n-2}u(x)\le\alpha$ for $\forall\,x\in\partial\mathbb{R}_{+}^{n}$.
Now we prove $\bar{\lambda}(x)^{n-2}u(x)=\alpha$ for $\forall\,x\in\partial\mathbb{R}_{+}^{n}$
by a contradiction argument.

We divide the following proof into two cases: $\alpha<+\infty$ and $\alpha=+\infty$, and will exclude the case  $\alpha=+\infty$. 

{\emph {Case 1}}:  $\alpha<+\infty$.

\emph{Step 1:} If there exists a $x\in\partial\mathbb{R}_{+}^{n}$
such that $\bar{\lambda}(x)^{n-2}u(x)<\alpha$, then 
\begin{align}
 & \liminf_{y\rightarrow+\infty,y\in\overline{\mathbb{R}_{+}^{n}}}|y|^{n-2}(u(y)-u_{x,\bar{\lambda}(x)}(y))\label{eq:asymptotic behavior}\\
 & \ge\alpha-\limsup_{y\rightarrow+\infty,y\in\overline{\mathbb{R}_{+}^{n}}}|y|^{n-2}\left(\frac{\bar{\lambda}(x)}{|y-x|}\right)^{n-2}u\left(x+\frac{\bar{\lambda}(x)^{2}(y-x)}{|y-x|^{2}}\right)\nonumber \\
 & =\alpha-\bar{\lambda}(x)^{n-2}u(x)>0.\nonumber 
\end{align}

\emph{Step 2:} We prove that 
\begin{equation}
u-u_{x,\bar{\lambda}(x)}>0\text{ in }\overline{\mathbb{R}_{+}^{n}}\backslash\overline{B_{\bar{\lambda}(x)}}(x).\label{eq:Strict positive for moving forward a little-1}
\end{equation}

We argue by the contradiction argument and without loss of generality,
assume $x=0$ and $(u-u_{\bar{\lambda}})(\overline{x})=0$ for some
$\overline{x}\in\overline{\mathbb{R}_{+}^{n}}\backslash\overline{B_{\bar{\lambda}}}$,
where $\bar{\lambda}:=\bar{\lambda}(0)$. We know $\sigma_{k}(A^{u_{\text{\ensuremath{\bar{\lambda}}}}})=2^{k}\binom{n}{k}$
on $\overline{\mathbb{R}_{+}^{n}}\backslash\overline{B_{\bar{\lambda}}}$
and $\mathcal{B}_{k}^{u_{\bar{\lambda}}}=c_{0}\,\text{ on }\partial\mathbb{R}_{+}^{n}\backslash B_{\bar{\lambda}}.$
It holds that
\[
\begin{cases}
0=F(A^{u})-F(A^{u_{\overline{\lambda}}})=\mathbb{L}(u-u_{\overline{\lambda}}), & \text{in}\quad\overline{\mathbb{R}_{+}^{n}}\backslash\overline{B_{\bar{\lambda}}},\\
0=\mathcal{B}_{k}^{u}-\mathcal{B}_{k}^{u_{\overline{\lambda}}}=L(u-u_{\overline{\lambda}}) & \text{on}\quad\partial\mathbb{R}_{+}^{n}\backslash B_{\bar{\lambda}}.
\end{cases}
\]
If $\overline{x}\in\mathbb{R}_{+}^{n}\backslash\overline{B_{\bar{\lambda}}}$,
then by the strong maximum principle, $u-u_{\bar{\lambda}}=0$ near
$\overline{x}$ and then $u=u_{\bar{\lambda}}$ on $\overline{\mathbb{R}_{+}^{n}}$,
which is contradicted to (\ref{eq:asymptotic behavior}). If $\overline{x}$
is on boundary, then by Hopf Lemma, $\partial_{n}(u-u_{\bar{\lambda}})\bigg|_{\overline{x}}>0$.

At $\overline{x}$, 
\[
\partial_{\alpha\beta}(u-u_{\bar{\lambda}})\ge0,\,\,\partial_{\alpha}(u-u_{\bar{\lambda}})=0,\,\,u-u_{\bar{\lambda}}=0
\]
and then 
\begin{align*}
0 & =L(u-u_{\bar{\lambda}})\bigg|_{x=\overline{x}}\\
 & =-a_{\alpha\beta}\partial_{\alpha\beta}(u-u_{\bar{\lambda}})+b_{\alpha}\partial_{\alpha}(u-u_{\bar{\lambda}})-b_{n}\partial_{n}(u-u_{\bar{\lambda}})+c(u-u_{\bar{\lambda}})\\
 & \le-b_{n}\partial_{n}(u-u_{\bar{\lambda}})<0,
\end{align*}
which yields a contradiction. We have proved (\ref{eq:Strict positive for moving forward a little-1}).

$\vspace{0.5pt}$
Noting that in the proof of Lemma \ref{lem:boundary kelvin equal}, once we obtain Claim 1 and  Claim 2, the remaining proof has no relationship with the assumption \eqref{eq:strong assmption}.
Now with \eqref{eq:asymptotic behavior} and (\ref{eq:Strict positive for moving forward a little-1}) established in Step 1 and Step 2,  the remaining argument is same as the
proof of Lemma \ref{lem:boundary kelvin equal} after Claim 2, and  it will
yield a contradiction to the largest number of $\bar{\lambda}.$ Thus
\[
\bar{\lambda}(x)^{n-2}u(x)=\alpha.
\]

{\emph{Case 2}:}$\alpha=+\infty$.

 By repeating the argument in Step 1 and Step 2 in {\emph{Case 1}}, it holds that 
\[
\bar{\lambda}(x)=+\infty,\,\forall x\in\partial\mathbb{R}_{+}^{n},
\]
from which 
\[
u_{x,\lambda}\le u\,\,\text{on}\,\,\mathbb{R}_{+}^{n}\backslash B_{\lambda}(x)\quad\text{for}\,\,0<\lambda<+\infty.
\]
By Lemma \ref{lem:single variable function} in Appendix, it holds
that $u(x',x_{n})=u(0,x_{n})$ for $x_{n}\ge0.$ By $Tr(A^u)^{\mathrm{T}}>0$
on $\partial\mathbb{R}_{+}^{n}$, we have the following contradiction:
\begin{align*}
0 & <-\frac{2}{n-2}u^{-(n+2)/(n-2)}\overline{\Delta}u+\frac{2n}{(n-2)^{2}}u^{-2n/(n-2)}|\overline{\nabla}u|^{2}-\frac{2(n-1)}{(n-2)^{2}}u^{-2n/(n-2)}|\nabla u|^{2}\\
 & =-\frac{2(n-1)}{(n-2)^{2}}u^{-2n/(n-2)}u_{n}^{2}\le0\:\,\text{on}\,\,\partial\mathbb{R}_{+}^{n},
\end{align*}
where $\overline{\Delta}u=\sum_{\beta=1}^{n-1}u_{\beta\beta}=0$ and
$|\overline{\nabla}u|^{2}=\sum_{\beta=1}^{n-1}u_{\beta}^{2}=0$. Therefore, {\emph{Case 2}} can not happen.

Above all, we have completed the proof of the lemma. 
\end{proof}

Until now, we just use $\lim\inf_{x\in\overline{\mathbb{R}_{+}^{n}},|x|\rightarrow+\infty}|x|^{n-2}u$ and have not used the assumption that $\lim_{x\rightarrow0}u_{0,1}$  exists in Theorem \ref{thm:half space liouville}. In the left paragraph, we will need this assumption, see equation (\ref{eq:limit at zero}) and \eqref{eq:limit at zero 2} below.

Let us recall the essential lemma in \cite{Li-Li}. 
\begin{lem}[Li-Li,Acta Math.2005]
\label{lem:lift lemma} For $n\ge2$, $u\in L_{\text{loc}}^{1}(\mathbb{B}_{1}^{n}\backslash\{0\})$
is the solution of $\Delta u\le0$ in $\mathbb{B}_{1}^{n}\backslash\{0\}$
in the distribution sense. Assume that there exists $a\in\mathbb{R}$
and $p\neq q\in\mathbb{R}^{n}$ such that 
\[
u(x)\ge\max\{a+p\cdot x-\delta(x),a+q\cdot x-\delta(x)\},\quad x\in\mathbb{B}_{1}^{n}\backslash\{0\},
\]
where $\delta(x)\ge0$ satisfies $\lim_{x\rightarrow0}\frac{\delta(x)}{|x|}=0.$
Then 
\[
\lim_{r\rightarrow0}\inf_{B_{r}}u>a.
\]
\end{lem}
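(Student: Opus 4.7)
The plan is a contradiction argument driven by the sub-mean-value inequality for superharmonic functions after removing the puncture at the origin. First I would rewrite the two affine lower bounds as a single bound via the identity $\max\{A,B\} = \tfrac{A+B}{2}+\tfrac{|A-B|}{2}$:
\[
u(x) \ge a + m\cdot x + \tfrac{1}{2}|\ell\cdot x| - \delta(x), \qquad x \in \mathbb{B}_{1}^{n}\setminus\{0\},
\]
where $m := (p+q)/2$ and $\ell := p-q$, which is nonzero by hypothesis. Either of the original affine bounds, together with $\delta(x)=o(|x|)$, already yields $\liminf_{x\to 0}u(x)\ge a$; the point is therefore to rule out equality.

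Suppose for contradiction that $\liminf_{x\to 0}u(x) = a$, and set $v(x) := u(x) - a - m\cdot x$. Subtracting a linear (hence harmonic) function preserves distributional superharmonicity, so $v$ is superharmonic on $\mathbb{B}_{1}^{n}\setminus\{0\}$, satisfies
\[
v(x) \ge \tfrac{1}{2}|\ell\cdot x| - \delta(x) \ge -\delta(x),
\]
is bounded below near $0$, and has $\liminf_{x\to 0}v(x) = 0$. I would then invoke the standard removable-singularity theorem: a distributionally superharmonic function on a punctured ball that is bounded below near the puncture extends to a superharmonic function $\tilde v$ on the full ball with $\tilde v(0) = \liminf_{x\to 0}v(x) = 0$. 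The sub-mean-value inequality at the origin then gives, for every small $r > 0$,
\[
0 = \tilde v(0) \ge \frac{1}{|B_{r}|}\int_{B_{r}}\tilde v(y)\,dy \ge \frac{1}{|B_{r}|}\int_{B_{r}}\Bigl(\tfrac{1}{2}|\ell\cdot y| - \delta(y)\Bigr)dy.
\]
Rotating coordinates so $\ell$ points along $e_{1}$ gives $\int_{B_{r}}|\ell\cdot y|\,dy = c_{n}|\ell|\,r^{n+1}$ with $c_{n}>0$, while $\delta(y)/|y|\to 0$ yields $\int_{B_{r}}\delta(y)\,dy = o(r^{n+1})$. Hence the right-hand side of the display is strictly positive for all sufficiently small $r$, a contradiction. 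Thus $\liminf_{x\to 0}u(x)>a$, and since $r\mapsto \inf_{B_r}u$ is monotone, this is equivalent to $\lim_{r\to 0}\inf_{B_r}u>a$.

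The main obstacle is conceptual, not computational: the lower-bound quantity $\tfrac{1}{2}|\ell\cdot y|$ is itself subharmonic and vanishes on the hyperplane $\ell^{\perp}$, so no pointwise minimum-principle comparison against $v$ can work directly. The only way to exploit it is via averaging, where its $O(|y|)$ size produces a ball integral of order $r^{n+1}$ strictly dominating the $o(r^{n+1})$ sublinear correction coming from $\delta$. This is precisely where the hypothesis $p\ne q$ enters; with $\ell = 0$ the argument degenerates and only $\liminf u\ge a$ survives. The removable-singularity step is standard but is the one place where the distributional meaning of $\Delta v\le 0$ must be handled carefully, either through the Riesz representation of superharmonic functions or by the classical Bôcher-type extension for superharmonic distributions bounded below near an isolated singularity.
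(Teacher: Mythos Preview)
The paper does not prove this lemma; it is quoted verbatim from Li--Li (Acta Math.\ 2005) and used as a black box, so there is no in-paper argument to compare against.

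Your proof is correct and is essentially the natural argument. The identity $\max\{A,B\}=\tfrac12(A+B)+\tfrac12|A-B|$ converts the two affine minorants into the single bound $u(x)\ge a+m\cdot x+\tfrac12|\ell\cdot x|-\delta(x)$ with $\ell=p-q\ne 0$; subtracting the harmonic part $a+m\cdot x$ leaves a distributional superharmonic $v$ on the punctured ball with $v\ge\tfrac12|\ell\cdot x|-\delta(x)$ and (under the contradiction hypothesis) $\liminf_{x\to 0}v(x)=0$. The removable-singularity step is legitimate: boundedness below near $0$ gives a superharmonic extension $\tilde v$ across the puncture, and finiteness of the $\liminf$ rules out a Dirac mass in $-\Delta\tilde v$ at $0$, forcing $\tilde v(0)=\liminf_{x\to 0}v(x)=0$ for the canonical (lower-semicontinuous) representative. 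The sub-mean-value inequality then yields
\[
0=\tilde v(0)\ \ge\ \fint_{B_r}\tilde v\ \ge\ \fint_{B_r}\Bigl(\tfrac12|\ell\cdot y|-\delta(y)\Bigr)\,dy,
\]
and the right-hand side is $c_n|\ell|\,r-o(r)>0$ for small $r$, the desired contradiction. Your diagnosis that averaging, not pointwise comparison, is what makes the corner term $\tfrac12|\ell\cdot y|$ usable is exactly right: that term is subharmonic and vanishes on $\ell^\perp$, so only its positive mean over balls, dominating the $o(r)$ contribution of $\delta$, does the work.
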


For simplicity, we denote $u_{\psi}:=|J_{\psi}|^{\frac{n-2}{2n}}u\circ\psi$,
where $J_{\psi}$ denotes the Jacobian of $\psi.$ Let us complete
the proof of Theorem \ref{thm:half space liouville} as the following: 
\begin{proof}[Proof of Theorem \ref{thm:half space liouville}]
Let $\varphi^{(x)}(y)=x+\frac{\bar{\lambda}(x)^{2}(y-x)}{|y-x|^{2}}$
and $z=\frac{y}{|y|^{2}}=:\psi(y)$. Note that $u_{x,\bar{\lambda}(x)}(y)=u_{\varphi^{(x)}}(y)$
and 
\[
u_{\varphi^{(x)}\circ\psi}(y)=\frac{1}{|y|^{n-2}}\frac{\bar{\lambda}(x)^{n-2}}{|\psi(y)-x|^{n-2}}u\left(x+\frac{\bar{\lambda}^{2}(x)(\psi(y)-x)}{|\psi(y)-x|^{2}}\right).
\]
It follows that
\begin{align}
u_{\varphi^{(x)}\circ\psi}(0) & =\lim_{y\rightarrow0}u_{\varphi^{(x)}\circ\psi}(y)\label{eq:zero point equal}\\
 & =\bar{\lambda}(x)^{n-2}u(x)=\alpha,\nonumber 
\end{align}
where the last equality is due to Lemma \ref{lem: how far can bar=00003D00007B=00003D00005Clambda=00003D00007D be?}
and $\alpha=\lim\inf_{x\in\overline{\mathbb{R}_{+}^{n}},|x|\rightarrow+\infty}|x|^{n-2}u$ by  Lemma \ref{lem: how far can bar=00003D00007B=00003D00005Clambda=00003D00007D be?}.

Denote $w^{(x)}(y):=u_{\varphi^{(x)}\circ\psi}(y)$. By the conformal
invariance of equation (\ref{eq:main equation}), $w^{(x)}(y)$ satisfies
\[
\begin{cases}
\sigma_{k}(A^{w^{(x)}})=2^{k}\binom{n}{k} \qquad& \text{in}\,\,\mathbb{R}_{+}^{n},\\
\mathcal{B}_{k}^{w^{(x)}}=c_{0} & \text{on}\,\,\partial\mathbb{R}_{+}^{n}\backslash\{\frac{x}{|x|^{2}}\}.
\end{cases}
\]
Also, $w^{(x)}(0)=\bar{\lambda}(x)^{n-2}u(x)=\alpha$ for $\,x\in\partial\mathbb{R}_{+}^{n}$.
Meanwhile, $u_{\psi}(y)=\frac{1}{|y|^{n-2}}u(\frac{y}{|y|^{2}})$
satisfies 
\[
\begin{cases}
\sigma_{k}(A^{u_{\psi}})=2^{k}\binom{n}{k} \qquad & \text{in}\,\,\mathbb{R}_{+}^{n},\\
\mathcal{B}_{k}^{u_{\psi}}=c_{0} & \text{on}\,\,\partial\mathbb{R}_{+}^{n}\backslash\{0\}.
\end{cases}
\]
Since we assume that $\lim_{x\rightarrow0}u_{0,1}$ exists, with the
definition of $\alpha,$ we obtain 
\begin{equation}
\lim_{y\rightarrow0}u_{\psi}(y)=\alpha.\label{eq:limit at zero}
\end{equation}

From the definition of $\bar{\lambda}(x),$ 
\[
u(y)\ge u_{\varphi^{(x)}}(y)\quad\text{on}\,\,\overline{\mathbb{R}_{+}^{n}}\backslash B_{\bar{\lambda}(x)}(x)\quad\text{for}\,x\in\partial\mathbb{R}_{+}^{n}.
\]
Thus, $u_{\psi}(z)\ge u_{\varphi^{(x)}\circ\psi}(z)$, where $z\in \{z|\frac{z}{|z|^{2}}\in\mathbb{R}_{+}^{n}\backslash B_{\bar{\lambda}(x)}(x)\}$, 
from which, there exists $\delta(x)>0$ depending on $\bar{\lambda}(x)$
and $x$ such that 
\[
u_{\psi}(z)\ge w^{(x)}(z)\quad \,\,\text{in}\quad \overline{B_{\delta(x)}^{+}}\backslash\{0\}.
\]

As $w^{(x)}$ is $C^{2}$ near $0$ and $u_{\psi}$ is $C^{2}$ away
from $0$ in $\overline{\mathbb{R}_{+}^{n}}\backslash\{0\}$, we know
that 
\[
u_{\psi}(z)\ge w^{(x)}(z)\quad \,\,\text{in}\,\,B_{\delta(x)}^{\mathsf{T}}\backslash\{0\}:=\{x|x=(x',0),|x'|\le\delta(x)\}\backslash\{0\},
\]
and 
\begin{equation}\label{lower bound of upsi}
\frac{2}{n-2}\ln u_{\psi}(z)\ge\frac{2}{n-2}\ln w^{(x)}(z)\quad \,\,\text{in}\,\,B_{\delta(x)}^{\mathsf{T}}\backslash\{0\}.
\end{equation}
By (\ref{eq:limit at zero}) and Lemma \ref{lem: how far can bar=00003D00007B=00003D00005Clambda=00003D00007D be?},
we have 
\begin{equation}
\lim_{z\rightarrow0}\frac{2}{n-2}\ln u_{\psi}(z)=\frac{2}{n-2}\ln\alpha.\label{eq:limit at zero 2}
\end{equation}

Taking $W:=\frac{2}{n-2}\ln u_{\psi}$, from $A^{u}\in\Gamma_{k}^{+}$,
we know that 
\[
A[W]:=-\nabla_{ij}W+\nabla_{i}W\nabla_{j}W-\frac{|\nabla W|^{2}}{2}\delta_{ij}\in\Gamma_{2}^{+}.
\]
It yields that 
\[
-\sum_{\beta=1}^{n-1}\partial_{\beta\beta}W+\sum_{\beta=1}^{n-1}|\partial_{\beta}W|^{2}-(n-1)\frac{|\nabla W|^{2}}{2}>0\quad\text{on}\,\,\partial\mathbb{R}_{+}^{n}\backslash\{0\},
\]
which is due to $TrA[W]^{\mathsf{\mathrm{T}}}>0$. Thus, we know that
$-\sum_{\beta=1}^{n-1}\partial_{\beta\beta}W>0$ in$\,\,B_{\delta(x)}^{\mathsf{T}}\backslash\{0\}$
for $n\ge3$.

For convenience, we denote $\overline{\nabla}\varphi:=(\partial_{1}\varphi,\cdots,\partial_{n-1}\varphi)$
in the following paragraph. By Lemma \ref{lem:lift lemma}, we know
that there exists a constant vector $\vec{l}\in\mathbb{R}^{n-1}$
such that 
\[
\overline{\nabla}\ln w^{(x)}(0)=\vec{l}\,\,\,\text{for}\,\,\forall\,x\in\partial\mathbb{R}_{+}^{n}.
\]
Otherwise, there exist two points $x_{1}$ and $x_{2}$ such that
$\overline{\nabla}\ln w^{(x_{1})}(0)\neq\overline{\nabla}\ln w^{(x_{2})}(0)$.
Then, by Lemma \ref{lem:lift lemma} and (\ref{eq:zero point equal})(\ref{lower bound of upsi}),
\[
\underset{z\rightarrow0,\,\,z\in\partial\mathbb{R}_{+}^{n}}{\underline{\lim}}W>\frac{2}{n-2}\ln\alpha,
\]
 which contradicts (\ref{eq:limit at zero 2}).

$\vspace{0.5pt}$

Then there exists a constant vector $\vec{V}$ such that $\overline{\nabla}w^{(x)}(0)=\vec{V}\,\text{for}\,\,x\in\partial\mathbb{R}_{+}^{n}.$
For $|y|$ small, 
\begin{align*}
w^{(x)}(y) & =\bar{\lambda}(x)^{n-2}\big(1+(n-2)x\cdot y+O(|y|^{2})\big)u\left(x+\bar{\lambda}(x)^{2}y+O(|y|^{2})\right)\\
 & =\bar{\lambda}(x)^{n-2}(1+(n-2)x\cdot y)u\left(x+\bar{\lambda}(x)^{2}y\right)+O(|y|^{2}),
\end{align*}
and by Lemma \ref{lem: how far can bar=00003D00007B=00003D00005Clambda=00003D00007D be?},
we have 
\begin{align*}
\overline{\nabla}w^{(x)}(0) & =(n-2)\bar{\lambda}(x)^{n-2}u(x)x'+\bar{\lambda}(x)^{n}\overline{\nabla}u(x)\\
 & =(n-2)\alpha x'+\alpha^{n/(n-2)}u(x)^{n/(2-n)}\overline{\nabla}u(x)\\
 & =\vec{V}.
\end{align*}
Therefore, there exists a constant $d$ such that 
\[
\frac{n-2}{2}\alpha^{n/(n-2)}u^{-\frac{2}{n-2}}=-\vec{V}\cdot x'+\frac{1}{2}(n-2)\alpha|x'|^{2}+d\,\,\text{for }x\in\partial\mathbb{R}_{+}^{n}.
\]
Now 
\begin{align*}
u(x',0) & =\bigg(\frac{-\vec{V}\cdot x'+\frac{1}{2}(n-2)\alpha|x'|^{2}+d}{\frac{n-2}{2}\alpha^{n/(n-2)}}\bigg)^{-\frac{n-2}{2}}\\
 & =\alpha\left(\frac{1}{|x'-x_{0}'|^{2}+d_{1}^{2}}\right)^{\frac{n-2}{2}},
\end{align*}
where $u>0$ and $d_{1}$ is a constant.

For simplicity, take $x_{0}'=0$. Then 
\begin{align*}
u(0) & =\alpha d_{1}^{-(n-2)}=\bar{\lambda}(0)^{n-2}u(0)d_{1}^{-(n-2)}
\end{align*}
and now $d_{1}=\bar{\lambda}(0).$

Let $u_{\bar{\lambda}}(y)=\big(\frac{\bar{\lambda}}{|y|}\big)^{n-2}u\big(\frac{\bar{\lambda}^{2}y}{|y|^{2}}\big)$,
where $\bar{\lambda}=\bar{\lambda}(0)$ and 
\begin{align*}
u_{\bar{\lambda}}(x',0) & =\alpha\bigg(\frac{\bar{\lambda}}{|x'|}\bigg)^{n-2}\left(\frac{1}{\bar{\lambda}^{4}/|x'|^{2}+d_{1}^{2}}\right)^{\frac{n-2}{2}}\\
 & =u(x',0)\,\,\text{for}\,\,x\in\partial\mathbb{R}_{+}^{n}.
\end{align*}
Also we know that 
\[
\begin{cases}
\sigma_{k}(A^{u})=2^{k}\binom{n}{k}, \quad g_{u}\in\Gamma_{k}^{+}, \qquad \quad& \text{in}\quad~ \overline{\mathbb{R}_{+}^{n}},\\
\sigma_{k}(A^{u_{\bar{\lambda}}})=2^{k}\binom{n}{k}, \quad g_{u_{\bar{\lambda}}}\in\Gamma_{k}^{+}, & \text{in}\quad~ \overline{\mathbb{R}_{+}^{n}}\backslash\{0\},\\
\mathcal{B}_{k}^{u}=c_{0} & \text{on}\quad \partial\mathbb{R}_{+}^{n},\\
\mathcal{B}_{k}^{u_{\bar{\lambda}}}=c_{0} & \text{on}\quad \partial\mathbb{R}_{+}^{n}\backslash\{0\},\\
u_{\bar{\lambda}}(x',0)=u(x',0) & \text{on}\quad\partial\mathbb{R}_{+}^{n}\backslash\{0\},\\
u-u_{\bar{\lambda}}\ge0 & \text{in}\quad~ \mathbb{R}_{+}^{n}\backslash B_{\bar{\lambda}}.
\end{cases}
\]
Together with \eqref{eq:boundary equation}\eqref{mean curvature expression}\eqref{tangential expression}\eqref{monotone}, we have $\partial_{x_n}u=\partial_{x_n}u_{\bar{\lambda}}$ on $\partial\mathbb{R}_{+}^{n}\backslash\{0\}$.  By the strong maximum principle and the Hopf  lemma, we know that $u=u_{\bar{\lambda}}$
on $\mathbb{R}_{+}^{n}$ and now $u$ satisfies (\ref{eq:strong assmption}).
Then from Theorem \ref{thm:half space liouville-1}, we get Theorem
\ref{thm:half space liouville}. 
\end{proof}

\section{Appendix }

In this appendix, we give a corner Hopf Lemma for second order boundary
condition by mimicking the proof of Li-Zhang \cite{Li-Zhang} and
list several lemmas in Li-Li \cite{Li-Li2} for readers' convenience.

\subsection{Corner Hopf Lemma}

Let $\{A_{ij}(x)\}_{n\times n}$ and $\{C_{\alpha\beta}(x)\}_{(n-1)\times(n-1)}$
be two positive function matrices such that there exist positive constants
$\lambda_{1},\lambda_{2},\varLambda_{1}$,$\varLambda_{2}$ such that
\[
\lambda_{1}\delta_{ij}\le\{A_{ij}\}_{n\times n}\le\varLambda_{1}\delta_{ij},
\]
\[
\lambda_{2}\delta_{\alpha\beta}\le\{C_{\alpha\beta}\}_{(n-1)\times(n-1)}\le\varLambda_{2}\delta_{\alpha\beta}.
\]
For any $\bar{x}\in\partial B_{\lambda}\cap\partial\mathbb{R}_{+}^{n}$,
taking $\Omega=\big(\mathbb{R}_{+}^{n}\backslash B_{\lambda}\big)\cap B_{1}(\overline{x})$,
$\sigma=x_{n}$ and $\rho=|x|^{2}-\lambda^{2}$. Let $\vec{n}$ be
the unit inward normal vector of the surface $\{x_{n}=0\}\cap\partial\Omega$.
The proof of the following theorem is almost same as Li-Zhang \cite{Li-Zhang}
and we omit it. 
\begin{lem}
\label{lem:corner hopf }Let $u\in C^{2}(\overline{\Omega})$ be a
positive function in $\Omega,$ $u(\bar{x})=0$ and there exists a positive
constant $A$ such that 
\[
\begin{cases}
\mathscr{P}u:=-A_{ij}u_{ij}+B_{i}u_{i}\ge-Au & \mathrm{in}\quad\Omega,\\
\mathscr{L}u:=-C_{\alpha\beta}\partial_{\alpha\beta}u+D_{\alpha}\partial_{\alpha}u-C_{0}u_{n}\ge-Au  \qquad& \mathrm{on}\quad\{\sigma=0,\rho>0\},
\end{cases}
\]
where $C_{0}$ is a positive function and $D_{\alpha}$ are functions.
Then 
\[
\frac{\partial u}{\partial\nu'}(\bar x)>0,
\]
where $\nu'$ is the unit normal vector on $\{\sigma=0,\rho=0\}$
entering $\{\sigma=0,\rho>0\}.$ 
\end{lem}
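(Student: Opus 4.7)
The plan is to adapt the classical Hopf argument by constructing a nonnegative barrier $h$ that vanishes at the corner $0$ with strictly positive $\nu'$-derivative there, and then running a maximum-principle comparison for $u-\epsilon h$ that simultaneously processes the interior operator $\mathscr{P}$ and the tangentially elliptic boundary operator $\mathscr{L}$. After translating so that the corner sits at the origin, let $x^{c}$ denote the center of $B_{\lambda}$; then $|x^{c}|=\lambda$ and, because the corner lies on $\partial\mathbb{R}_{+}^{n}$, $x^{c}_{n}=0$. I take
\[
h(x)=e^{\alpha\rho(x)}-1,\qquad \rho(x)=|x-x^{c}|^{2}-\lambda^{2},
\]
with $\alpha>0$ to be chosen large. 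Then $h(0)=0$, $h\geq 0$ in $\Omega$, $h\equiv 0$ on $\partial B_{\lambda}$, and since $\nu'=-x^{c}/\lambda$ is the outward unit normal to $\partial B_{\lambda}$ within $\{\sigma=0\}$,
\[
\partial_{\nu'}h(0)=2\alpha(-x^{c})\cdot(-x^{c}/\lambda)=2\alpha\lambda>0.
\]

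Next I would verify that $h$ is a strict subsolution on $\Omega_{r}:=\Omega\cap B_{r}(0)$ and on its flat face, for $r$ small. The Hessian $h_{ij}=e^{\alpha\rho}\bigl(2\alpha\delta_{ij}+4\alpha^{2}(x-x^{c})_{i}(x-x^{c})_{j}\bigr)$ carries an $\alpha^{2}$-contribution which, after contraction with $\{A_{ij}\}$, is bounded below by $4\alpha^{2}\lambda_{1}(\lambda-r)^{2}e^{\alpha\rho}$; this dominates $B_{i}h_{i}+Ah$ once $\alpha$ is chosen large, so $\mathscr{P}h+Ah\leq 0$ in $\Omega_{r}$. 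On $\{\sigma=0,\rho>0\}\cap B_{r}(0)$ the identity $x^{c}_{n}=0$ forces $h_{n}=2\alpha(x_{n}-x^{c}_{n})e^{\alpha\rho}$ to vanish identically on the flat face, so the $-C_{0}h_{n}$ term drops out of $\mathscr{L}h$; the same $\alpha^{2}$ mechanism applied to $-C_{\alpha\beta}\partial_{\alpha\beta}h$ via positive definiteness of $\{C_{\alpha\beta}\}$ then gives $\mathscr{L}h+Ah\leq 0$ on the flat face.

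Set $w:=u-\epsilon h$. On the sphere piece $\partial B_{\lambda}\cap\overline{B_{r}(0)}$ (including the codimension-two edge $\partial B_{\lambda}\cap\partial\mathbb{R}_{+}^{n}$) the barrier vanishes, so $w=u\geq 0$; on the spherical cap $\partial B_{r}(0)\cap\overline{\Omega}$ the function $u$ is bounded below by a positive constant away from the corner locus, so $w\geq 0$ once $\epsilon$ is sufficiently small. In $\Omega_{r}$ we still have $\mathscr{P}w+Aw\geq 0$ and on the flat face $\mathscr{L}w+Aw\geq 0$. A negative interior minimum is impossible: at such a point the nonnegative Hessian and vanishing gradient give $\mathscr{P}w\leq 0$, incompatible with $\mathscr{P}w+Aw\geq 0$ at a negative value. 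A negative minimum at an interior point $p$ of the flat face is also impossible: $p$ is a minimum over $\overline{\Omega_{r}}$, so $\partial_{n}w(p)\geq 0$ (since $\Omega$ lies above the flat boundary), $\partial_{\alpha}w(p)=0$, and $\partial_{\alpha\beta}w(p)\geq 0$ as a matrix; hence positivity of $C_{0}$ and ellipticity of $\{C_{\alpha\beta}\}$ force $\mathscr{L}w(p)\leq 0$, contradicting $\mathscr{L}w+Aw\geq 0$ at $w(p)<0$. Therefore $w\geq 0$ in $\overline{\Omega_{r}}$, and since $w(0)=0$ is a minimum value with $\nu'$ pointing into the set where $w\geq 0$,
\[
\partial_{\nu'}u(0)\geq \epsilon\,\partial_{\nu'}h(0)=2\epsilon\alpha\lambda>0.
\]

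The genuine obstacle beyond the classical Hopf lemma is the second-order boundary condition: excluding a negative minimum on the flat face requires not merely a first-derivative Hopf step but a full tangential-Hessian-plus-normal-derivative comparison, which works because $\mathscr{L}$ is uniformly elliptic tangentially and the coefficient $-C_{0}$ of $u_{n}$ has the correct sign. The barrier must be compatible with $\mathscr{L}$, and it is the geometric fact $x^{c}_{n}=0$ that kills $h_{n}$ on the flat face and removes the only dangerous term in $\mathscr{L}h$; this is the single new technical point beyond Li-Zhang's first-order corner Hopf argument, and is precisely why their proof carries over.
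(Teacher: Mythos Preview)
The paper omits this proof, saying only that it is ``almost same as Li--Zhang''. Your barrier-plus-maximum-principle plan is the right strategy, and your identification of the crucial mechanism---that the center of the excluded ball lies on $\partial\mathbb{R}_{+}^{n}$, so $h_{n}\equiv 0$ on the flat face and $\mathscr{L}h$ reduces to its tangentially elliptic part---is precisely the adaptation needed for a second-order boundary operator; your exclusion of negative minima in the interior and on the relative interior of the flat face is correct.

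The gap is in the cap step. For $n\ge 3$ and any small $r$, the cap $\partial B_{r}(0)\cap\overline{\Omega}$ meets the codimension-two edge $\partial B_{\lambda}\cap\partial\mathbb{R}_{+}^{n}$ in a nonempty $(n-3)$-sphere at distance $r$ from the corner; in the paper's application the function is $u-u_{\bar\lambda}$, which vanishes on \emph{all} of $\partial B_{\bar\lambda}$, hence at those cap--edge points. Approaching them along the cap, both $u$ and $h=e^{\alpha\rho}-1$ tend to $0$ and nothing in your argument bounds $u/h$ from below there, so the assertion that $u$ is bounded below by a positive constant on the cap is unjustified and no single $\epsilon>0$ forces $w\ge 0$ on the full cap. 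The standard cure is to place the comparison ball \emph{inside} the domain, tangent to $\partial B_{\lambda}$ only at the corner: with $z=t\nu'$ on the flat face one has $\overline{B_{t}(z)}\cap\partial B_{\lambda}=\{0\}$, and on the half-annulus $\bigl(B_{t}(z)\setminus\overline{B_{t/2}(z)}\bigr)\cap\{x_{n}\ge 0\}$ the classical Hopf barrier $\tilde h=e^{-\alpha|x-z|^{2}}-e^{-\alpha t^{2}}$ works. The inner sphere $\partial B_{t/2}(z)$ lies entirely in $\{\rho>0\}$, where one checks independently (interior Hopf together with the $\mathscr{L}$-inequality rule out zeros of $u$ on the flat face off $\partial B_{\lambda}$) that $u>0$, making the Dirichlet comparison clean there; and since $z_{n}=0$, your key observation $\tilde h_{n}\equiv 0$ on the flat face, and with it the entire $\mathscr{L}$-argument, carry over unchanged.
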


\subsection{Useful Lemmas}

For reader's convenience, we list some classical lemmas in \cite{Li-Li,Li-Li2,Li-Zhang}. 
\begin{lem}[Li-Zhang, J. Anal. Math. 2003]
\label{lem:Cal Lemma-entire space}
Let $f\in C^{1}\left(\mathbb{R}^{n}\right),n\geq1,l>0$. Suppose that
for every $x\in\mathbb{R}^{n}$, there exists $\lambda(x)>0$ such
that 
\[
\left(\frac{\lambda(x)}{|y-x|}\right)^{l}f\left(x+\frac{\lambda(x)^{2}(y-x)}{|y-x|^{2}}\right)=f(y)\quad\mathrm{for}\,\:y\in\mathbb{R}^{n}\backslash\{x\}.
\]
Then for some $a\geq0,d>0,\overline{x}\in\mathbb{R}^{n}$, 
\[
f(x)=\pm\left(\frac{a}{d^{2}+|x-\overline{x}|^{2}}\right)^{\frac{l}{2}}.
\]
\end{lem}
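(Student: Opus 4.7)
The plan is to classify $f$ by a two-step reduction: first extract a global multiplicative constant from the inversion identity, then analyze a functional equation for the reciprocal power. Letting $|y|\to\infty$ in the invariance relation and using that $|y-x|/|y|\to 1$ and $x+\lambda(x)^2(y-x)/|y-x|^2\to x$, I obtain
\[
\lim_{|y|\to\infty}|y|^l f(y)\;=\;\lambda(x)^l f(x)\;=:\;A,
\]
a value independent of $x$. If $A=0$, then $\lambda(x)^l f(x)=0$ with $\lambda(x)>0$ forces $f\equiv 0$, giving the stated form with $a=0$. Otherwise $\lambda(x)=(A/f(x))^{1/l}$ keeps a constant sign, so replacing $f$ by $-f$ if necessary I may assume $A>0$ and $f>0$; then $\lambda\in C^1$ automatically.

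Next, I substitute $\phi:=A^{2/l}f^{-2/l}$, so that $\lambda^2=\phi$. The invariance relation converts into the multiplicative identity
\[
\phi(x)\phi(y)\;=\;|y-x|^2\,\phi(z),\qquad z=x+\frac{\phi(x)(y-x)}{|y-x|^2},\qquad |z-x|\,|y-x|=\phi(x).
\]
Writing $\phi=|\cdot|^2+\psi$, expanding $|z|^2$ with the above relation, and simplifying algebraically, every purely quadratic term cancels, collapsing the identity to the striking cross-ratio form
\[
\frac{\psi(y)-\psi(x)}{|y-x|}\;=\;\frac{\psi(z)-\psi(x)}{|z-x|}.
\]
Thus the classification reduces to showing that $\psi$ is affine; for then $\phi(x)=|x-\bar x|^2+d^2$ and $f=(a/(d^2+|x-\bar x|^2))^{l/2}$ follow at once.

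To prove $\psi$ is affine, fix a unit vector $h$ and set $Q_{x,h}(r):=(\psi(x+rh)-\psi(x))/r$. The cross-ratio identity says exactly $Q_{x,h}(r)=Q_{x,h}(\phi(x)/r)$, i.e., the difference quotient of $\psi$ along the ray from $x$ is invariant under the one-dimensional inversion $r\leftrightarrow\phi(x)/r$. Since $\psi\in C^1$, $Q_{x,h}(r)\to\nabla\psi(x)\cdot h$ as $r\to 0^+$, and by the inversion symmetry the same limit holds as $r\to\infty$, giving
\[
\lim_{r\to\infty}\frac{\psi(x+rh)}{r}\;=\;\nabla\psi(x)\cdot h.
\]
Comparing this asymptotic slope from two distinct basepoints $x$ and $x'$ along the same asymptotic direction $h$ (write $x+sh=x'+s'h'$, where $s'\to\infty$ and $h'\to h$ as $s\to\infty$), continuity of $\nabla\psi$ forces $\nabla\psi(x)\cdot h=\nabla\psi(x')\cdot h$ for every $h\in S^{n-1}$. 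Hence $\nabla\psi$ is a constant vector, which I write as $-2\bar x$, so $\psi(x)=-2\bar x\cdot x+c$ and $\phi(x)=|x-\bar x|^2+(c-|\bar x|^2)$. Positivity of $f$ (equivalently of $\phi$) everywhere forces $d^2:=c-|\bar x|^2>0$; setting $a:=A^{2/l}$ and recalling the earlier sign choice yields $f(x)=\pm(a/(d^2+|x-\bar x|^2))^{l/2}$.

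I expect the delicate step to be the interchange-of-limits argument identifying the asymptotic slope across basepoints: each $x$ furnishes its own limit $\nabla\psi(x)\cdot h$, but comparing them requires handling a joint limit $s\to\infty$, $h'\to h$. A clean alternative, which I would use if the direct comparison proves unwieldy, is to differentiate the functional equation $\phi(x)\phi(y)=|y-x|^2\phi(z)$ in $x$ with $y$ frozen and exploit the $C^1$ hypothesis on $f$ (hence on $\phi$) to read off an algebraic relation for $\nabla\phi$ that forces $\phi$ to be a quadratic polynomial directly, bypassing the asymptotic-slope comparison altogether.
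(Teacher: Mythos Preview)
The paper does not prove this lemma; it is quoted from Li--Zhang (J.\ Anal.\ Math.\ 2003) and merely listed in the Appendix for the reader's convenience, so there is no in-paper proof to compare against.

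Your argument is correct. The one step you flagged as delicate---comparing asymptotic slopes from two basepoints---is in fact straightforward if you restrict to basepoints on the same ray. From $Q_{x,h}(r)=Q_{x,h}(\phi(x)/r)$ and $\psi\in C^{1}$ you already have $\lim_{r\to\infty}Q_{x,h}(r)=\nabla\psi(x)\cdot h$. Now take $x'=x+sh$ and compute
\[
\nabla\psi(x')\cdot h
=\lim_{r\to\infty}\frac{\psi(x+(s+r)h)-\psi(x+sh)}{r}
=\lim_{r\to\infty}\frac{s+r}{r}\,Q_{x,h}(s+r)-\lim_{r\to\infty}\frac{\psi(x+sh)-\psi(x)}{r}
=\nabla\psi(x)\cdot h.
\]
Hence $\partial_{h}\psi$ is constant along every line parallel to $h$; integrating gives $\psi(x+sh)=\psi(x)+s\,\nabla\psi(x)\cdot h$ for all $x,h,s$, and setting $x=0$ yields $\psi(y)=\psi(0)+\nabla\psi(0)\cdot y$ for every $y$. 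No joint limit or interchange is needed, and your proposed fallback (differentiating the functional equation for $\phi$) is unnecessary. The remaining steps---the extraction of $A=\lambda(x)^{l}f(x)$, the reduction to $\phi(x)\phi(y)=|y-x|^{2}\phi(z)$, the algebraic collapse to the cross-ratio identity for $\psi$, and the final identification of $\bar{x},d,a$ with $d^{2}>0$ forced by $\phi>0$---are all correct as written.
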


\begin{lem}[Li-Zhang, J. Anal. Math. 2003]
\label{lem:single variable function}
Let $f\in C^{1}(\mathbb{R}_{+}^{n}),n\geq2,\nu>0$. Assume that 
\[
\left(\frac{\lambda}{|y-x|}\right)^{\nu}f\left(x+\frac{\lambda^{2}(y-x)}{|y-x|^{2}}\right)\leq f(y),\quad\forall\lambda>0,\,\,x\in\partial\mathbb{R}_{+}^{n},\,\,|y-x|\geq\lambda,\,\,y\in\mathbb{R}_{+}^{n}.
\]
Then 
\[
f(x)=f(x^{\prime},t)=f(0,t),\quad\forall x=(x^{\prime},t)\in\mathbb{R}_{+}^{n}.
\]
\end{lem}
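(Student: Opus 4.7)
The plan is to extract from the moving-sphere inequality a pointwise first-order differential constraint on $f$ by differentiating at the critical radius $\lambda = |y-x|$, and then to exploit the freedom to translate $x$ along $\partial\mathbb{R}_+^n$ in order to kill the tangential gradient of $f$.

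Fix $y \in \mathbb{R}_+^n$ and $x \in \partial\mathbb{R}_+^n$ with $y \neq x$, and set
\[
\phi(\lambda) := \left(\frac{\lambda}{|y-x|}\right)^{\nu} f\!\left(x + \frac{\lambda^{2}(y-x)}{|y-x|^{2}}\right) - f(y), \qquad \lambda \in (0,\,|y-x|].
\]
The hypothesis says $\phi \le 0$ on the whole interval, while direct substitution at $\lambda = |y-x|$ shows $\phi(|y-x|) = 0$. Hence $\phi$ attains its maximum at the right endpoint, so its left derivative there is nonnegative. A chain-rule computation using $f \in C^1$ gives
\[
\phi'(|y-x|) \;=\; \frac{1}{|y-x|}\bigl[\,\nu f(y) + 2(y-x)\cdot\nabla f(y)\,\bigr],
\]
and hence the pointwise inequality
\[
\nu f(y) + 2(y-x) \cdot \nabla f(y) \;\ge\; 0 \qquad \forall\, x \in \partial\mathbb{R}_+^n,\ \forall\, y \in \mathbb{R}_+^n \setminus \{x\}.
\]

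Writing $y = (y', y_n)$, $x = (x', 0)$, and $\nabla f = (\overline{\nabla} f,\, f_n)$, this becomes
\[
\nu f(y) + 2(y' - x') \cdot \overline{\nabla} f(y) + 2 y_n f_n(y) \;\ge\; 0 \qquad \forall\, x' \in \mathbb{R}^{n-1}.
\]
If $\overline{\nabla} f(y) \neq 0$ at some $y$, then choosing $x' = y' + t\,\overline{\nabla} f(y)$ and letting $t \to +\infty$ forces the left-hand side to $-\infty$, a contradiction. Therefore $\overline{\nabla} f \equiv 0$ on $\mathbb{R}_+^n$, which is exactly the conclusion $f(x', t) = f(0, t)$.

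The only real subtlety is the one-sided differentiation at the critical radius $\lambda = |y-x|$: this is where the $C^1$ hypothesis on $f$ is essential, since the argument relies on the full derivative of $\phi$ agreeing with its left derivative at the endpoint. Once this infinitesimal version of the hypothesis is rigorously in hand, everything else is elementary linear algebra exploiting the arbitrariness of $x' \in \mathbb{R}^{n-1}$.
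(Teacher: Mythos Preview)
Your proof is correct. The paper does not actually supply a proof of this lemma; it merely records the statement in the Appendix and refers the reader to the appendix of Li--Zhang \cite{Li-Zhang}, so there is no in-paper argument to compare against. Your differentiation at the critical radius $\lambda=|y-x|$ followed by sending $x'\to\infty$ along $\overline{\nabla}f(y)$ is precisely the standard Li--Zhu/Li--Zhang argument for this class of calculus lemmas, and the computation of $\phi'(|y-x|)$ and the endpoint-maximum reasoning are both correct.
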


Lemma \ref{lem:Cal Lemma-entire space} and Lemma \ref{lem:single variable function}
can be found in the appendix of \cite{Li-Zhang}. The following lemma
is about the classification of radial solution, which can be found
in \cite{Li-Li2} for a more general operator including $\sigma_{k}$. 
\begin{thm}[Li-Li, J. Eur. Math. Soc. 2006]
\label{thm:Li-Li-JEMS Thm3} For
$n\geq3$, assume that $u\in C^{2}(\mathbb{B}_{1}^{n})$ is radially
symmetric and satisfies 
\[
\sigma_{k}^{\frac{1}{k}}\left(A^{u}\right)=1,\quad A^{u}\in\Gamma_{k}^{+},\quad u>0\quad\text{ \ensuremath{\mathrm{in}} }\mathbb{B}_{1}^{n}.
\]
Then 
\[
u(x)\equiv\left(\frac{a}{1+b|x|^{2}}\right)^{(n-2)/2}\text{ \ensuremath{\mathrm{in}} }\mathbb{B}_{1}^{n},
\]
where $a>0,b\geq-1$ and $\sigma_{k}^{\frac{1}{k}}\big((2b/a^{2})\mathbb{I}_{n\times n}\big)=1$. 
\end{thm}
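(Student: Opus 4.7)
The plan is to reduce the PDE to a radial ODE, match the $2$-jet of the solution at the origin to that of a bubble, and conclude by ODE uniqueness. Writing $u=u(r)$ with $r=|x|$ and substituting $v(r):=u(r)^{-2/(n-2)}$, one has a positive radial $C^{2}$ function on $\mathbb{B}_{1}^{n}$. A direct computation from the formula for $A^{u}=g_{u}^{-1}A_{g_{u}}$ recalled in Section~2 shows that, in a Euclidean orthonormal frame adapted to the radial direction, $A^{u}$ is diagonal with simple radial eigenvalue $\lambda_{r}=vv''-\tfrac{1}{2}(v')^{2}$ and tangential eigenvalue $\lambda_{t}=vv'/r-\tfrac{1}{2}(v')^{2}$ of multiplicity $n-1$. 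Hence $\sigma_{k}(A^{u})=1$ reduces to the scalar ODE
$$\binom{n-1}{k-1}\,\lambda_{r}\,\lambda_{t}^{k-1}+\binom{n-1}{k}\,\lambda_{t}^{k}=1,$$
and the cone condition $A^{u}\in\Gamma_{k}^{+}$ ensures $\partial_{\lambda_{r}}\sigma_{k}=\binom{n-1}{k-1}\lambda_{t}^{k-1}>0$, so the ODE is non-degenerate in $v''$ for $r>0$.

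Smoothness of $u$ at the origin forces $v'(0)=0$, and L'Hopital gives $\lambda_{r}(0)=\lambda_{t}(0)=v(0)v''(0)$; plugging into the ODE at $r=0$ yields $v(0)v''(0)=\binom{n}{k}^{-1/k}$, so the $2$-jet $(v(0),v'(0),v''(0))$ is pinned by the single positive parameter $v(0)$. On the other hand, the bubble candidate $u_{a,b}(x)=(a/(1+b|x|^{2}))^{(n-2)/2}$ corresponds to $v_{a,b}(r)=(1+br^{2})/a$, for which a short calculation gives $\lambda_{r}\equiv\lambda_{t}\equiv 2b/a^{2}$; consequently $\sigma_{k}^{1/k}(A^{u_{a,b}})=\binom{n}{k}^{1/k}(2b/a^{2})=1$ iff $2b/a^{2}=\binom{n}{k}^{-1/k}$. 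Setting $a=1/v(0)$ and choosing $b$ so that this identity holds, the bubble $v_{a,b}$ matches the $2$-jet of $v$ at the origin exactly.

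To finish I would invoke ODE uniqueness. Away from $r=0$, ellipticity in $\Gamma_{k}^{+}$ lets us solve the ODE for $v''$ as a smooth function of $(r,v,v')$, so Picard--Lindelof gives $v\equiv v_{a,b}$ on every $[\varepsilon,1)$ as soon as they agree to first order at a single interior point. The main obstacle is the regular singular point at $r=0$, where the factor $v'/r$ in $\lambda_{t}$ prevents a direct IVP formulation; I would handle this by a Frobenius-type expansion, repeatedly differentiating the ODE at $r=0$ and using $v'(0)=0$ together with L'Hopital for $v'/r$ to obtain a recursion that determines every $v^{(j)}(0)$ from $v(0)$ alone. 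Both $v$ and $v_{a,b}$ satisfy the same recursion, so their Taylor series at $0$ coincide, and analyticity of solutions to the elliptic ODE (in the $\Gamma_{k}^{+}$ cone) upgrades this to $v\equiv v_{a,b}$ in a neighborhood of $0$; combining with the uniqueness away from $0$ gives $v\equiv v_{a,b}$ on all of $\mathbb{B}_{1}^{n}$. Positivity of $u$ on $\mathbb{B}_{1}^{n}$ forces $1+br^{2}>0$ for $r<1$, i.e.\ $b\geq -1$, and the identity $\sigma_{k}^{1/k}((2b/a^{2})\mathbb{I}_{n\times n})=1$ coming from the $2$-jet matching is precisely the one stated in the theorem.
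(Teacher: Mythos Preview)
The paper does not give its own proof of this statement; it is recorded in the Appendix as a known result quoted from Li--Li \cite{Li-Li2}, so there is no in-paper argument to compare against. I therefore comment only on your proposal.

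Your reduction is correct: with $v=u^{-2/(n-2)}$ one has $A^{u}=v\nabla^{2}v-\tfrac12|\nabla v|^{2}\,\mathbb{I}$, and for radial $v$ the eigenvalues are exactly the $\lambda_{r},\lambda_{t}$ you wrote. The $2$-jet matching at the origin and the identification of the bubble via $\lambda_{r}\equiv\lambda_{t}\equiv 2b/a^{2}$ are also right. The Frobenius recursion you describe does in fact close (passing to $s=r^{2}$, one checks inductively that every Taylor coefficient of $V(s)=v(\sqrt{s})$ beyond the linear one is forced to vanish). The only genuinely nontrivial input you are using is real-analyticity of $v$: matching formal Taylor series at $0$ does not by itself yield $v\equiv v_{a,b}$, and you close the gap by invoking analyticity of $C^{2}$ solutions in $\Gamma_{k}^{+}$. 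That is true (Evans--Krylov gives $C^{2,\alpha}$, Schauder bootstraps to $C^{\infty}$, and Morrey's theorem on analytic elliptic equations gives $C^{\omega}$), but it is a heavy hammer that you have only asserted.

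A lighter route that bypasses analyticity and the singular point altogether: writing $\lambda_{r}=G(\lambda_{t})$ from the equation (possible since $F_{\lambda_{r}}>0$), a direct computation gives
\[
\lambda_{t}'=\frac{\bigl(G(\lambda_{t})-\lambda_{t}\bigr)\,(v-rv')}{rv}.
\]
Since $G(c)=c$ for $c=\binom{n}{k}^{-1/k}$ and $G'(c)-1=-n$, the function $\eta:=\lambda_{t}-c$ satisfies $\eta'=b(r)\eta$ with $b(r)\sim -n/r$ as $r\to 0^{+}$; hence $\eta(r)=\eta(r_{0})\exp\!\big(\int_{r_{0}}^{r}b\big)$ and the integral diverges to $+\infty$ as $r\to 0^{+}$, which together with $\eta(0)=0$ forces $\eta\equiv 0$. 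Then $\lambda_{r}=\lambda_{t}$ gives $v''=v'/r$, which integrates to $v=(1+br^{2})/a$. This ODE argument is closer in spirit to \cite{Li-Li,Li-Li2} and is entirely self-contained.
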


\bigskip

\noindent\textbf{Conflict of interest:} The author states that
there is no conflict of interest.

\end{document}